\numberwithin{equation}{section}
\newtheorem{thm}{Theorem}[section]
\newtheorem{lem}[thm]{Lemma}
\newtheorem{conj}[thm]{Conjecture}
\newtheorem{cor}[thm]{Corollary}
\theoremstyle{definition}
\theoremstyle{remark}
\newtheorem{remark}{Remark}[section]
\newtheorem*{remark*}{Remark}
\newtheoremstyle{claim} 
    {1em}                    
    {1em}                    
    {}                   
    {}                           
    {\bfseries}                   
    {.}                          
    {.5em}                       
    {}  
\theoremstyle{claim}
\newcommand{\m}{\mathfrak{m}}
\newcommand{\bs}\boldsymbol{}
\DeclareMathOperator{\sym}{sym}
\DeclareMathOperator{\sgn}{sgn}
\DeclareMathOperator{\SL}{SL}
\newcommand{\brac}[1]{\langle#1\rangle}
\renewcommand{\tilde}{\widehat}
\renewcommand{\tilde}{\widetilde}
\renewcommand{\phi}{\varphi}
\renewcommand{\Re}{{\rm Re}}
\renewcommand{\Im}{{\rm Im}}
\renewcommand{\bar}[1]{\overline{#1}}
\renewcommand{\mod}[1]{\,({\rm mod}\,#1)}
\definecolor{red}{rgb}{1,0,0}
\definecolor{orange}{rgb}{0.7,0.3,0}
\definecolor{blue}{rgb}{.2,.6,.75}
\definecolor{green}{rgb}{.4,.7,.4}
\begin{document}

\title{Mass Distribution for holomorphic cusp forms on the vertical geodesic}

\author{Qingfeng Sun and Qizhi Zhang}
\address{School of Mathematics and Statistics, Shandong University, Weihai,
		Weihai, Shandong 264209, China}

\email{qfsun@sdu.edu.cn\\
qzzhang@mail.sdu.edu.cn}

\subjclass[2010]{}
\keywords{Quantum variance, mass Distribution, holomorphic cusp forms, vertical geodesic}
%

\begin{abstract}
We compute the quantum variance of holomorphic cusp
forms on the vertical geodesic for smooth compactly supported test functions.
As an application we show that almost all holomorphic Hecke cusp forms,
whose weights are in a short interval, satisfy QUE
conjecture on the vertical geodesic.
\end{abstract}

\thanks{Q. Sun was partially
  supported by the National Natural Science Foundation
  of China (Grant Nos. 11871306 and 12031008) and the Natural
  Science Foundation of Shandong Province (Grant No. ZR2023MA003)}

\maketitle
\section{Introduction}
The \textit{Quantum Unique Ergodicity} (QUE) conjecture introduced by Rudnik and Sarnak in \cite{rudnickBehaviourEigenstatesArithmetic1994},
is concerned about the equidistribution of Hecke Maass eigenforms as the Laplace eigenvalue tends to infinity.
More precisely, suppose that there is a smooth compactly supported function $\psi$ on $X:=
{\rm SL_2(\mathbb{Z})}\backslash \mathbb{H}$ and
$f(z)$ is a $L^2$-normalized Hecke Maass cusp form with Laplace eigenvalue $\lambda$.
Then QUE predicts that
$$
\int_{X} \psi(z)|f(z)|^2 \frac{dxdy}{y^2} \to \frac{3}{\pi} \int_{X} \psi(z)
\frac{dxdy}{y^2}
$$
as $\lambda \to \infty$, which has been proved by Lindenstrauss \cite{lindenstraussInvariantMeasuresArithmetic2006} and
Soundararajan \cite{soundararajanQuantumUniqueErgodicity2010}. An analogous conjecture for holomorphic Hecke cusp forms
predicts the same thing by replacing $f(z)$ with $f(z)y^{k/2}$ and holding the same limit,
but at this time we take $k\rightarrow \infty$, where $k$ is the weight of the associated holomorphic cusp form.
This problem was solved by Holowinsky and Soundararajan in \cite{holowinskyMassEquidistributionHecke2010c}.

In \cite{youngQuantumUniqueErgodicity2016}, Young considered some analogs of the quantum unique ergodicity
conjecture for some “thin” sets, which are connected with the behavior of eigenfunctions when restricted to certain lower
dimensional submanifolds of interest, such as QUE restricted to geodesics, horocycles or shrinking discs.
Moreover, he showed that the Eisenstein series $E_T(z)=E(z, 1/2+iT)$ on the modular group
equi-distributes along the geodesic connecting 0 and $i\infty$, thus refining the results
for the QUE problem of the Eisenstein series on the modular group derived by
Luo and Sarnak \cite{luosarnakquantumergodicityofeigenfunctions}. In \cite{matteweisenstein2018}, he further proved that the
 Eisenstein series satisfies QUE when restricted to a vertical geodesic
 with $\Re(z)$ abritrary.
 In this direction, Young then put forward the following restricted QUE conjecture
 along geodesic segments for Hecke cusp forms.

\begin{conj}\cite[Conjecture 1.1]{youngQuantumUniqueErgodicity2016}\label{verticalgeo}
Suppose that $\psi\colon \mathbb{R}^+ \to \mathbb{R}$ is a smooth
compactly supported function. Then
$$
\lim_{k\to \infty} \int_0^\infty y^{k} |f(iy)|^2 \psi(y) \frac{dy}{y}
= \frac{3}{\pi} \int_0^\infty \psi(y)\frac{dy}{y},
$$
where $f(z)$ runs over weight $k$ holomorphic Hecke cusp forms that are $L^2$-normalized.
\end{conj}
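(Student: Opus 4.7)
The plan is to expand $f(iy)$ in its Fourier series and reduce the integral to a weighted sum of Hecke eigenvalues. Writing $a_f(n)=c_f\lambda_f(n)n^{(k-1)/2}$ with $c_f^2$ proportional to $(4\pi)^{k-1}/(\Gamma(k-1)L(1,\sym^2 f))$ for an $L^2$-normalized $f$, one obtains
\[
\int_0^\infty y^k|f(iy)|^2\psi(y)\,\frac{dy}{y}
\;=\;c_f^2\sum_{n,m\geq 1}\lambda_f(n)\lambda_f(m)(nm)^{(k-1)/2}W_\psi(n+m),
\]
where $W_\psi(N)=\int_0^\infty y^{k-1}e^{-2\pi Ny}\psi(y)\,dy$. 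The change of variables $t=2\pi Ny$ combined with Stirling localizes $(nm)^{(k-1)/2}W_\psi(n+m)$ at $n+m\asymp k/(2\pi y_0)$ for $y_0$ in the support of $\psi$, while the Gaussian factor $(4nm/(n+m)^2)^{(k-1)/2}\approx e^{-(k-1)(n-m)^2/2(n+m)^2}$ forces the transverse spread $|n-m|\ll\sqrt{k}$.

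For the diagonal $n=m$, the sum $\sum_n \lambda_f(n)^2 n^{k-1}W_\psi(2n)$ is, up to a smooth weight, a partial sum of $\sum_n\lambda_f(n)^2$; by Rankin--Selberg its density is $L(1,\sym^2 f)/\zeta(2)$, which cancels the $L(1,\sym^2 f)$ sitting inside $c_f^2$ and produces exactly $\frac{3}{\pi}\int_0^\infty \psi(y)\,dy/y$. The off-diagonal contribution becomes a Gaussian-weighted sum, of scale $\sqrt{k}$ in the shift $h=n-m$, of shifted convolution sums
\[
S_h(f)\;=\;\sum_n \lambda_f(n)\lambda_f(n+h)\,V\!\Bigl(\frac{n}{N}\Bigr),\qquad N\asymp k,\ 1\leq |h|\ll\sqrt{k},
\]
and it suffices to save a fixed power of $k$ in $|S_h(f)|$ on average over $h$ in this range.

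To attack the pointwise conjecture as stated (not merely the quantum-variance ``almost all'' form), I would implement a Holowinsky--Soundararajan-type dichotomy adapted to the restricted geodesic setting. If $L(1,\sym^2 f)\gg(\log k)^{-1}$, I would apply the Duke--Friedlander--Iwaniec $\delta$-method (or Jutila's variant of the circle method) together with the Petersson/Kuznetsov formula to each $S_h(f)$, expressing it as a moment of triple products $\langle f\bar f,P_h\rangle$ against a Poincar\'e series $P_h$ of shift $h$ and seeking a subconvex saving uniform in $h$. If $L(1,\sym^2 f)\ll(\log k)^{-1}$, I would invoke Holowinsky's multiplicative sieve, which then yields $\sum_{n\leq k}|\lambda_f(n)\lambda_f(n+h)|=o(k^{1+\eps})$ and handles the off-diagonal trivially. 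The main obstacle lies in the first case: unlike the full surface, here the weight $W_\psi(n+m)$ couples $n$ and $m$ in a non-separable way, so after spectral decomposition the triple-product $L$-values that appear have their spectral parameter tied to $h$. Producing a subconvex bound with explicit decay in that parameter, uniformly for $|h|\ll\sqrt{k}$, is beyond current technology---this is precisely the technical gap that forces the paper to settle for the quantum-variance relaxation rather than a proof of Conjecture~\ref{verticalgeo} for every individual $f$.
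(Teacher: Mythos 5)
You have not produced a proof, and indeed no proof is available: the statement you were given is Young's Conjecture~1.1, which the paper quotes precisely as an open conjecture and explicitly declares out of reach, since even the sharp upper bound $\int_0^\infty |f(iy)|^2 y^k\,\frac{dy}{y}\ll k^{\varepsilon}$ would already imply $L(f,1/2)\ll \mathfrak{Q}^{1/8+\varepsilon}$ with $\mathfrak{Q}\asymp k^2$, a subconvexity strength not known even for the Riemann zeta function. What the paper actually proves is the quantum variance asymptotic of Theorem~\ref{mainthm}, obtained by averaging over weights $k$ in a short interval with the Petersson formula, which yields only the ``almost all'' (quantum ergodicity) corollary, not the pointwise statement for every individual $f$. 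Your opening reduction is sound and matches the paper's setup: the diagonal $n=m$ gives $\frac{3}{\pi}\int_0^\infty\psi(y)\,\frac{dy}{y}$ via Rankin--Selberg, and the off-diagonal localizes to shifted convolution sums of length $N\asymp k$ with shifts $1\le|h|\ll\sqrt{k}$ (this is the content of the paper's Lemma~2.2). But from there your argument stops at a strategy sketch, and you concede yourself that the key step is beyond current technology; that concession is exactly the gap.

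Moreover, the dichotomy you propose would not close the argument even in its ``easy'' branch. After normalization, each shifted sum contributes $O(k^{\varepsilon})$ trivially and there are $\asymp\sqrt{k}$ admissible shifts, so one must save a full factor of about $k^{1/2}$ on average over $h$ --- essentially square-root cancellation in $\sum_{n\asymp k}\lambda_f(n)\lambda_f(n+h)$. Holowinsky's multiplicative sieve saves only powers of $\log k$, so the claim that the case $L(1,\sym^2 f)\ll(\log k)^{-1}$ ``handles the off-diagonal trivially'' is quantitatively false in this restricted-geodesic setting; this is precisely why the Holowinsky--Soundararajan dichotomy, which suffices for QUE on the full surface where logarithmic savings are enough, does not transfer to the one-dimensional geodesic. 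The other branch (the $\delta$-method/triple-product route with a subconvex saving uniform in $h$) is, as you note, equivalent in strength to unknown subconvexity. So the proposal identifies the right reduction but contains no mechanism to prove the conjecture, and it mislocates part of the difficulty by asserting the small-$L(1,\sym^2 f)$ case is harmless; the honest conclusion is the one the paper draws, namely to prove a variance bound on average over $k$ rather than Conjecture~\ref{verticalgeo} itself.
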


As Young mentioned, Conjecture \ref{verticalgeo} is out of reach of the current technology,
because a sharp upper bound of the form $\int_0^\infty |f(iy)|^2y^k \frac{dy}{y} \ll k^\varepsilon$
would imply a subconvexity bound of the form $L(f, 1/2) \ll \mathfrak{Q}^{1/8+\varepsilon}$,
where $\mathfrak{Q}\asymp k^2$ is the analytic conductor of $L(s, f)$, while
a subconvexity of this strength is currently not even known for the
Riemann zeta function.

 The equidistribution problem for almost all eigenforms is often termed \textit{Quantum Ergodicity} (QE)
 in the literature. In \cite{Zenz2021QuantumVF}, Peter proved such a QE result for holomorphic Hecke cusp forms
 on the vertical geodesic. Precisely,
 let $f$ be a holomorphic Hecke cusp form of weight $k$ for the full modular group $\Gamma=\SL_2(\mathbb{Z})$
 on the upper half plane $\mathbb{H}$. We normalize $f$ such that $||f||_2^2=\brac{f,f}=1$, where
$$\brac{f,g}:=\int_{X}f(z)\overline{g(z)}y^{k}\frac{dxdy}{y^2}$$
is the standard Petersson inner product for two holomorphic cusp forms $f,g$ of weight $k$.
For a smooth compactly supported test function $\psi\colon \mathbb{R^+} \to \mathbb{R}$, we define
\begin{equation}\label{vertquant}
\mu_f(\psi)=\int_0^\infty |f(iy)|^2 y^{k} \psi(y)\frac{dy}{y} \quad \text{and} \quad \mathbb{E}(\psi)=\frac{3}{\pi}\int_0^\infty \psi(y)\frac{dy}{y}.
\end{equation}
Peter \cite{Zenz2021QuantumVF} proved that, for any fixed $\epsilon>0$,
$$
\#\big\{f \in \bigcup_{\substack{K <  k \leq 2K\\ k\equiv 0 \mod{2}}}
H_k: |\mu_f(\psi)-\mathbb{E}(\psi)| > K^{-1/4+\epsilon}\big\} =o(K^2),
$$
i.e., almost all eigenforms $f \in  \bigcup_{\substack{K <  k \leq 2K\\ k\equiv 0 \mod{2}}} H_k$
are equi-distributed.

In this paper, we will refine Peter's result by showing that
such an analogous result holds for holomorphic Hecke cusp forms on the vertical geodesic
with weight in short intervals.
Similarly as in \cite{Zenz2021QuantumVF}, to measure how far $\mu_f(\psi)$ deviates from its expected
value $\mathbb{E}(\psi)$, we introduce the so-called \textit{quantum variance} (for the vertical geodesic),
which is (up to a suitable normalization) given by
$$\sum_{G < k-K \leq 2G} \sum_{f\in H_k} \big| \mu_f(\psi)-\mathbb{E}(\psi)\big|^2,$$
where $G=K^{\theta}$ with $0<\theta<1$ to be decided later, and
$H_k$ is a basis of holomorphic Hecke cusp forms. We will prove the following result.
\begin{thm}\label{mainthm}
Let $G=K^{\theta}$ with $\theta=\frac{9}{10}$.
Let $\psi_1,\psi_2$ and $h$ be smooth compactly supported functions on $\mathbb{R}^+$.
Suppose that $\psi_i(y)=\psi_i(1/y)$ for $i=1,2$.
Then
\begin{equation}
\label{thmequ1}\sum_{k\equiv 0\mod{2}} h\left(\frac{k-1-K}{G}\right)
\sum_{f\in H_k} L(1, \sym^2 f)  \Big( \mu_f(\psi_1) - \mathbb{E}(\psi_1)\Big)\cdot
\Big(\mu_f(\psi_2) - \mathbb{E}(\psi_2)\Big) = V(\psi_1, \psi_2)
\end{equation}
with
\begin{align*}
V(\psi_1&, \psi_2)=K^{\frac{1}{2}}G\log K\cdot \frac{\sqrt{2}\pi}{32}\tilde{\psi_1}(0)\tilde{\psi_2}(0) \int_0^\infty \frac{h(\sqrt{u})}{\sqrt{2\pi u}}du\\
+&K^{\frac{1}{2}}G\cdot\int_0^\infty \frac{h(\sqrt{u})}{\sqrt{2\pi u}} du \Big(\frac{\sqrt{2}\pi}{16}\Big(\frac{3}{2} \gamma - \log (4\pi)\Big) \tilde{\psi_1}(0)\tilde{\psi_2}(0) +\frac{\sqrt{2}\pi}{16} \tilde{\psi_1}(0)\tilde{\psi_2}'(0) \Big) \\
+&K^{\frac{1}{2}}G\cdot\frac{\sqrt{2}\pi}{16} \int_0^\infty \frac{h(\sqrt{u})}{\sqrt{2\pi u}} du \cdot  \frac{1}{2\pi i} \int_{(1)} \tilde{\psi}_1(-s_2)\tilde{\psi_2}(s_2)\zeta(1-s_2)\zeta(1+s_2)ds_2\\
+&O_{\psi_1,\psi_2}(K^{\frac{11}{8}}).\nonumber
 \end{align*}
as $K \to \infty$.
\end{thm}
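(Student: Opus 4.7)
The plan is to compute the variance via the Petersson trace formula after averaging the spectral side over the weight $k$. Writing $f=f_H/\|f_H\|$ for the $L^2$-normalization of the Hecke-normalized eigenform $f_H$ (so $\lambda_f(1)=1$), the Fourier expansion together with Mellin inversion of $\psi_i$ expresses
\begin{equation*}
\mu_f(\psi_i)=\frac{1}{\|f_H\|^2 (2\pi)^k}\sum_{m,n\ge 1}\lambda_f(m)\lambda_f(n)(mn)^{(k-1)/2}\cdot\frac{1}{2\pi i}\int_{(\sigma)}\widetilde{\psi}_i(s)\Gamma(k-s)\frac{(2\pi)^s}{(m+n)^{k-s}}\,ds.
\end{equation*}
The Rankin--Selberg identity $\|f_H\|^2=2\Gamma(k)L(1,\sym^2 f)/((4\pi)^k\pi)$ shows that the weight $L(1,\sym^2 f)$ in the theorem converts the factor $1/\|f_H\|^4$ produced by $\mu_f(\psi_1)\mu_f(\psi_2)$ into a factor $1/L(1,\sym^2 f)$ times a purely $k$-dependent constant. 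The spectral sum thus reduces to $\sum_{f\in H_k}\lambda_f(m_1)\lambda_f(n_1)\lambda_f(m_2)\lambda_f(n_2)/L(1,\sym^2 f)$, which is naturally suited to Petersson.

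Two applications of Hecke multiplicativity collapse the quadruple product to $\sum_{d_1,d_2,e}\lambda_f(N)$ where $N=m_1 n_1 m_2 n_2/(d_1 d_2 e)^2$ with $d_i\mid(m_i,n_i)$ and $e\mid(m_1 n_1/d_1^2,m_2 n_2/d_2^2)$, after which the Petersson trace formula supplies
\begin{equation*}
\sum_{f\in H_k}\frac{\lambda_f(N)}{L(1,\sym^2 f)}=\frac{k-1}{2\pi^2 N^{(k-1)/2}}\left(\delta_{N=1}+2\pi i^{-k}\sum_{c\ge 1}\frac{S(N,1;c)}{c}J_{k-1}\!\left(\frac{4\pi\sqrt{N}}{c}\right)\right).
\end{equation*}
The diagonal $N=1$ imposes $m_1n_1m_2n_2=(d_1d_2e)^2$; writing $m_i=d_im_i'$, $n_i=d_in_i'$ with $(m_i',n_i')=1$ forces $m_1'n_1'=m_2'n_2'=e$ and yields a Dirichlet series of $\zeta$-type. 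This diagonal contribution, combined with the three subtractive terms $-\mathbb{E}(\psi_1)\mu_f(\psi_2)$, $-\mathbb{E}(\psi_2)\mu_f(\psi_1)$, $\mathbb{E}(\psi_1)\mathbb{E}(\psi_2)$ in the expanded variance, yields precise cancellation of the main-volume pieces, leaving a residual that folds into $V(\psi_1,\psi_2)$. The hypothesis $\psi_i(y)=\psi_i(1/y)$ translates to $\widetilde{\psi}_i(-s)=\widetilde{\psi}_i(s)$ and will be used to symmetrize the resulting Mellin integrands.

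The core of the calculation is the off-diagonal Kloosterman contribution. The $k$-sum $\sum_k h((k-1-K)/G)i^{-k}\Gamma(k-s_1)\Gamma(k-s_2)\Gamma(k)^{-1}J_{k-1}(4\pi\sqrt{N}/c)$ is analyzed by combining Stirling's formula on the Gamma ratio with an integral representation of the Bessel function, effectively performing stationary phase in $k$. The stationary condition localizes $\sqrt{N}/c\asymp K$ with width of order $G^{-1}$, producing a kernel whose total mass contributes the factor $\int_0^\infty h(\sqrt{u})/\sqrt{2\pi u}\,du$ appearing in $V(\psi_1,\psi_2)$. Opening the Kloosterman sums and executing the $c$- and $N$-sums reduces the problem to a Mellin contour integral in $(s_1,s_2)$, whose leading asymptotic is controlled by a double pole of $\zeta(1+s)\zeta(1-s)$-type structure near $s=0$. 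Evaluating this residue produces the principal $K^{1/2}G\log K$ term, and Laurent expansion at the pole produces the subleading $K^{1/2}G$ contributions: the constants $\gamma$ and $\log 4\pi$, the $\widetilde{\psi}_1(0)\widetilde{\psi}_2'(0)$ term, and the line integral of $\widetilde{\psi}_1(-s_2)\widetilde{\psi}_2(s_2)\zeta(1-s_2)\zeta(1+s_2)$ on $\Re(s_2)=1$.

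The main obstacle is controlling the error below $K^{11/8}$. This requires combining Weil's bound $|S(N,1;c)|\le\tau(c)\sqrt{c}$, Deligne's bound $|\lambda_f(n)|\le\tau(n)$, and sharp uniform estimates on $J_{k-1}(x)$ in the oscillatory regime $x>k$, the transition regime $x\approx k$, and the exponentially-small regime $x<k$. The choice $\theta=9/10$ balances the stationary-phase gain $G=K^\theta$ against the arithmetic cancellation in the Kloosterman sums: for the main term $K^{1/2}G=K^{1/2+\theta}$ to dominate $K^{11/8}$, one needs $\theta>7/8$, and $\theta=9/10$ provides a modest safety margin for the remaining bilinear-form estimates.
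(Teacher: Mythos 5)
Your setup — expanding $\mu_f(\psi_1)\mu_f(\psi_2)$ as a quadruple sum over $\lambda_f(m_1)\lambda_f(n_1)\lambda_f(m_2)\lambda_f(n_2)$, collapsing by Hecke multiplicativity, and applying Petersson with a $k$-average — is in the same general spirit as the paper's, but you skip the reduction that makes the computation tractable. The paper first separates the $m=n$ terms (giving $\mathcal{E}_\psi$, handled by the Luo--Sarnak second-moment bound) and rewrites the $m\neq n$ part as a \emph{localized} shifted convolution $\mathcal{S}_\psi \approx \frac{\pi}{2L(1,\sym^2 f)}\sum_{\ell\ll k^{1/2+\varepsilon}}\sum_{n\asymp k}\frac{\lambda_f(n)\lambda_f(n+\ell)}{\sqrt{n(n+\ell)}}\exp(-k\ell^2/2(2n+\ell)^2)\psi(k/2\pi(2n+\ell))$ before ever touching Petersson. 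This localization is what restricts the Kloosterman modulus to $cd_1d_2\ll K^{1-\theta+\varepsilon}$ after applying the averaged (ILS) trace formula; without it, the $c$-range in your direct quadruple-sum version is much wider and your planned use of Weil's bound alone has no hope of controlling it.

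The more serious flaw is that you attribute the leading term $K^{1/2}G\log K\cdot\frac{\sqrt{2}\pi}{32}\tilde\psi_1(0)\tilde\psi_2(0)\int_0^\infty h(\sqrt{u})/\sqrt{2\pi u}\,du$ (and its Laurent companions) to the ``off-diagonal Kloosterman contribution.'' In the paper this is simply not where the main term lives: all four explicit pieces of $V(\psi_1,\psi_2)$ come from the Petersson \emph{diagonal} $\mathcal{D}$ (Lemma~\ref{diagonal}), which after restricting to $n_1=n_2$, $m_1=m_2$ reduces to a Mellin triple integral with poles at $s_3=(1+s_1+s_2)/2$, $s_1=0$, $s_1=-s_2$, and a double pole at $s_2=0$; the Kloosterman part $\mathcal{OD}$ is purely an error $O(K^{11/8})$ (Lemma~\ref{offdiagonalfinaloutcome}). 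Establishing this error bound is not a matter of Weil plus $J$-Bessel asymptotics: the paper performs Poisson summation in $n_1$, then in $n_2,m_1,m_2$, invokes stationary phase, uses the average identity $\sum_{a_i,b_i\bmod c}S(a_1(a_1+b_1),a_2(a_2+b_2);c)e_c(2a_1a_2+a_1b_2+a_2b_1)=c^3\phi(c)$, and finally shows by a four-fold contour shift that the candidate Kloosterman main term \emph{vanishes} — the last step crucially using the evenness hypothesis $\tilde\psi_1(-s)=\tilde\psi_1(s)$, which in your proposal is invoked only to ``symmetrize the Mellin integrands,'' not to kill a residue. As written, your argument would double-count the main term (once in the residual of your $N=1$ diagonal, once again from the Kloosterman sum) and has no mechanism to reach the $O(K^{11/8})$ error. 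Also, once Hecke and Petersson have been applied there are no remaining $\lambda_f$'s to bound, so your appeal to Deligne's bound plays no role.
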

In the computation of quantum variances one usually chooses a test function $\psi$
which is on average $0$, i.e. $\mathbb{E}(\psi)=0$.
As a corollary of Theorem \ref{mainthm} we have the following result.
\begin{cor}\label{maincor}
Let $G=K^{\theta}$ with $\theta=\frac{9}{10}$.
Let $\psi_1,\psi_2$ and $h$ be smooth compactly supported functions on $\mathbb{R}^+$.
Suppose that $\psi_i(y)=\psi_i(1/y)$ and $\mathbb{E}(\psi_i)=0$ for $i=1,2$. Then
\begin{equation}\label{thmequ2}
\sum_{k\equiv 0\mod{2}} h\left(\frac{k-1-K}{G}\right) \sum_{f\in H_k} L(1, \sym^2 f)   \mu_f(\psi_1) \mu_f(\psi_2) = V(\psi_1, \psi_2)
\end{equation}
with
\begin{align}\label{slickvar}
V(\psi_1, \psi_2)=&K^{\frac{1}{2}}G\cdot\frac{\sqrt{2}\pi}{16} \int_0^\infty \frac{h(\sqrt{u})}{\sqrt{2\pi u}} du \cdot  \frac{1}{2\pi i} \int_{(1)} \tilde{\psi}_1(-s_2)\tilde{\psi_2}(s_2)\zeta(1-s_2)\zeta(1+s_2)ds_2\\
&+O_{\psi_1,\psi_2}(K^{\frac{11}{8}}),\nonumber
\end{align}
as $K\to \infty$.
\end{cor}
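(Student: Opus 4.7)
The plan is to deduce Corollary \ref{maincor} as a direct specialization of Theorem \ref{mainthm}, with essentially no new estimation needed. The key observation is that under the Mellin transform convention $\tilde{\psi}(s)=\int_0^\infty \psi(y)\,y^s\,\tfrac{dy}{y}$ used implicitly throughout the statement of Theorem \ref{mainthm}, the expected value defined in \eqref{vertquant} satisfies $\mathbb{E}(\psi)=\frac{3}{\pi}\tilde{\psi}(0)$. Hence the hypothesis $\mathbb{E}(\psi_i)=0$ for $i=1,2$ is equivalent to $\tilde{\psi_1}(0)=\tilde{\psi_2}(0)=0$.

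Given this, I would first simplify the left side of \eqref{thmequ1}: expanding the product shows that
\begin{equation*}
\bigl(\mu_f(\psi_1)-\mathbb{E}(\psi_1)\bigr)\bigl(\mu_f(\psi_2)-\mathbb{E}(\psi_2)\bigr)=\mu_f(\psi_1)\mu_f(\psi_2)
\end{equation*}
under the vanishing hypothesis, so the left side of \eqref{thmequ1} reduces to the left side of \eqref{thmequ2}. On the right side, I would inspect each term in the formula for $V(\psi_1,\psi_2)$: the leading term of order $K^{1/2}G\log K$ is proportional to $\tilde{\psi_1}(0)\tilde{\psi_2}(0)$ and therefore vanishes; both contributions on the second line carry a factor of $\tilde{\psi_1}(0)$ (explicitly $\tilde{\psi_1}(0)\tilde{\psi_2}(0)$ and $\tilde{\psi_1}(0)\tilde{\psi_2}'(0)$) and also vanish; only the contour integral term involving $\tilde{\psi_1}(-s_2)\tilde{\psi_2}(s_2)\zeta(1-s_2)\zeta(1+s_2)$ survives, together with the error term $O_{\psi_1,\psi_2}(K^{11/8})$. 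This yields exactly the expression \eqref{slickvar}, so the corollary follows without further work. There is no genuine obstacle here: the corollary is a bookkeeping consequence of Theorem \ref{mainthm}, isolating the test functions of mean zero (the natural setting for quantum variance) for which all lower-order diagonal-type contributions drop out.
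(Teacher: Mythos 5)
Your proposal is correct and follows the same (implicit) route as the paper, which states the corollary without spelling out a proof: under the paper's convention $\tilde{\psi}(s)=\int_0^\infty\psi(y^{-1})y^{s-1}\,dy$ one indeed has $\tilde{\psi}(0)=\int_0^\infty\psi(y)\,\tfrac{dy}{y}$, so $\mathbb{E}(\psi_i)=0$ is equivalent to $\tilde{\psi_i}(0)=0$, and every term in the formula for $V(\psi_1,\psi_2)$ in Theorem \ref{mainthm} except the contour-integral term and the error term carries an explicit factor of $\tilde{\psi_1}(0)$. (Minor remark: your stated Mellin convention $\int_0^\infty\psi(y)y^s\,\tfrac{dy}{y}$ differs from the paper's by the sign of $s$, but the two coincide under the assumed symmetry $\psi(y)=\psi(1/y)$ and in any case agree at $s=0$, so nothing in your argument is affected.)
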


A direct consequence of Theorem \ref{mainthm} is the following equi-distribution result.
\begin{cor}[Quantum Ergodicity]
Let $f$ be a holomorphic Hecke cusp form of even weight $k$ and $\psi$ a real-valued
smooth compactly supported on $\mathbb{R}^+$. For any $\epsilon>0$, we have
$$
\#\Bigg\{f \in \bigcup_{\substack{G <  k-K \leq 2G\\ k\equiv 0 \mod{2}}}
H_k: |\mu_f(\psi)-\mathbb{E}(\psi)| > K^{-1/4+\epsilon}\Bigg\} =o(KG),
$$
i.e., almost all eigenforms $f \in  \bigcup_{\substack{G <  k-K \leq 2G\\ k\equiv 0 \mod{2}}} H_k$ are equi-distributed.
\end{cor}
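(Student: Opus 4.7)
My plan is to deduce the equidistribution statement from the quantum variance bound of Theorem \ref{mainthm} via a Chebyshev-type inequality, which is the standard route from a second-moment estimate to an ``almost all'' statement.

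The first step is to arrange the symmetry hypothesis $\psi(y)=\psi(1/y)$ required by Theorem \ref{mainthm}. Since $f$ is modular of weight $k$ for $\SL_2(\Z)$, the identity $f(-1/z)=z^k f(z)$ at $z=iy$ gives $|f(i/y)|^2=y^{2k}|f(iy)|^2$, so the density $y^k|f(iy)|^2\,dy/y$ is invariant under $y\mapsto 1/y$. Consequently, replacing $\psi$ by its symmetrization $\psi^\sharp(y):=\tfrac12(\psi(y)+\psi(1/y))$ changes neither $\mu_f(\psi)$ nor $\mathbb{E}(\psi)$, and I may assume from the start that $\psi=\psi^\sharp$.

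Next I would fix a nonnegative smooth $h$, compactly supported in $(1/2,3)$ with $h\geq 1$ on $[1,2]$, and apply Theorem \ref{mainthm} with $\psi_1=\psi_2=\psi$. Each of the three displayed terms contributing to $V(\psi,\psi)$ is of size $O_\psi(K^{1/2}G\log K)$, while the error $K^{11/8}$ is of smaller order than $K^{1/2}G\log K=K^{7/5}\log K$ for $G=K^{9/10}$. Since every summand on the left-hand side of \eqref{thmequ1} is nonnegative, restricting to the range $G<k-K\leq 2G$ (where $h((k-1-K)/G)\geq 1$ by the choice of $h$) gives
$$
\sum_{\substack{k\equiv 0\,(2)\\ G<k-K\leq 2G}}\ \sum_{f\in H_k}L(1,\sym^2 f)\,\bigl|\mu_f(\psi)-\mathbb{E}(\psi)\bigr|^2\ \ll_\psi\ K^{1/2}G\log K.
$$

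For the final step, the Hoffstein--Lockhart lower bound yields $L(1,\sym^2 f)\gg_\eta k^{-\eta}\gg K^{-\eta}$ for any fixed $\eta>0$ and weights $k\asymp K$. If $N$ denotes the cardinality in the statement, then each contributing form $f$ satisfies $|\mu_f(\psi)-\mathbb{E}(\psi)|^2>K^{-1/2+2\epsilon}$ and therefore contributes at least $\gg K^{-\eta-1/2+2\epsilon}$ to the sum above. Comparing with the upper bound yields
$$
N\cdot K^{-1/2+2\epsilon-\eta}\ll_\psi K^{1/2}G\log K,\qquad\text{so}\qquad N\ll_{\psi,\epsilon} K^{1-2\epsilon+\eta}\,G\log K,
$$
and choosing $\eta<2\epsilon$ gives $N=o(KG)$, as asserted. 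I do not anticipate any substantive obstacle here: the only genuine points of care are the symmetrization of $\psi$ to meet the hypothesis of Theorem \ref{mainthm} and the use of the (essentially trivial) lower bound on $L(1,\sym^2 f)$ to absorb its weight into an arbitrarily small $K^\eta$. The size comparison $K^{1/2}G\log K\gg K^{11/8}$ holds comfortably for $\theta=9/10$, so the error term in Theorem \ref{mainthm} is harmless for the application, and no other input is needed.
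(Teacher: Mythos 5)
Your proof is correct and is precisely the standard Chebyshev-type deduction the paper has in mind when it calls this ``a direct consequence of Theorem~\ref{mainthm}'' (the paper provides no written proof). The two points you flag as requiring care are indeed the ones worth spelling out: the symmetrization $\psi\mapsto\psi^\sharp$ (using $|f(i/y)|^2=y^{2k}|f(iy)|^2$, which makes $y^k|f(iy)|^2\,dy/y$ invariant under $y\mapsto 1/y$, so both $\mu_f$ and $\mathbb{E}$ are unchanged) is genuinely needed because the corollary does not assume $\psi(y)=\psi(1/y)$ while Theorem~\ref{mainthm} does; and the Hoffstein--Lockhart lower bound (even the weaker $L(1,\sym^2 f)\gg(\log k)^{-1}$ suffices, avoiding the auxiliary $\eta$ entirely) is exactly what lets you drop the weight. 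The only cosmetic point is your choice of $h$: since $(k-1-K)/G$ ranges over $((G-1)/G,(2G-1)/G]$, which dips slightly below $1$, you should take $h\geq 1$ on a neighborhood such as $[1/2,2]$ rather than exactly $[1,2]$; this is a trivial adjustment. The size comparisons ($K^{1/2}G\log K=K^{7/5}\log K$ dominates $K^{11/8}$, and the final bound $N\ll K^{1-\epsilon'}G\log K=o(KG)$) all check out.
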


\section{Proof of Theorem \ref{mainthm}}
\subsection{Background}
Let $\psi$ and $h$ be smooth compactly supported functions on $\mathbb{R^+}$ and let $H_k$ denote a
basis of Hecke cusp forms of weight $k$. We are concerned with the asymptotic behavior of
the sum
\begin{equation*}
\label{startingpoint}V(\psi_1, \psi_2)=
\sum_{k\equiv 0\mod{2}} h\Big(\frac{k-1-K}{G}\Big) \sum_{f\in H_k} L(1, \sym^2 f)
\Big( \mu_f(\psi_1) - \mathbb{E}(\psi_1)\Big) \Big(\mu_f(\psi_2) - \mathbb{E}(\psi_2)\Big).
\end{equation*}

Let $f\in H_k$ be a normalized Hecke cusp form with the Fourier expansion
$$
f(z)=a_f(1)\sum_{n=1}^\infty \lambda_f(n) (4\pi n)^{\frac{k-1}{2}} e(nz),
$$
where $|a_f(1)|^2=\frac{2\pi^2}{\Gamma(k)L(1,\sym^2f)}$
arises from the $L_2-$normalization $||f||_2^2=1$.
The associated $L$-function is defined as
$$L(s, \sym^2f)=\zeta(2s)\sum_{n=1}^\infty \frac{\lambda_f(n^2)}{n^s}$$
for $\Re(s)>1$.

Define the Mellin transform of $\psi$ by
$$\tilde{\psi}(s):=\int_0^\infty \psi(y^{-1})y^{s-1} dy.$$
Then $\tilde{\psi}$ is entire and it satisfies
$$
\tilde{\psi}(s)\ll_{a, b, j} (|s|+1)^{-j}, \qquad a \leq \Re(s) \leq b
$$
for any integer $j>0$.
We also have $\tilde{\psi}(s)=\tilde{\psi}(-s)$ for $\psi(y)=\psi(y^{-1})$.
The Mellin inversion yields
$$\psi(y)=\frac{1}{2\pi i} \int_{(\sigma)} \tilde{\psi}(s)y^s ds, \quad \text{for }\sigma >0, \; y >0.$$

By \cite{Zenz2021QuantumVF}, the expression of $\mu_f(\psi)$ by seperating the terms with $m=n$ from
those with $m\neq n$ is the following.
\begin{lem}We have
\begin{align}\mathcal{E}_\psi:=&\int_0^\infty \psi(y)y^k |a_f(1)|^2 \sum_{n=1}^\infty \lambda_f(n)^2 (4\pi n)^{k-1} e^{-4\pi ny} \frac{dy}{y} -\frac{3}{\pi}\int_0^\infty \psi(y)\frac{dy}{y}\label{errorterm}\\
=&\frac{2\pi^2}{L(1,\sym^2f)} \cdot{\frac{1}{2\pi i}} \int_{(1/2)}\tilde{\psi}(s-1) \frac{\zeta(s) L(s, \sym^2f)}{(4\pi)^{s} \zeta(2s)} \frac{\Gamma(k+s-1)}{\Gamma(k)} ds. \nonumber
\end{align}
\end{lem}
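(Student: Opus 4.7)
The plan is to convert the diagonal exponential sum into a contour integral via Mellin inversion, and then use the Rankin-Selberg factorization of the Dirichlet series $\sum_n \lambda_f(n)^{2}n^{-s}$ to recognize the expected main term $\mathbb{E}(\psi)$ as the residue at a single pole of the resulting integrand.

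Concretely, I would insert the Mellin representation $\psi(y)=\frac{1}{2\pi i}\int_{(\sigma)}\tilde{\psi}(s)\,y^{s}\,ds$ for some $\sigma>0$ into the first term of $\mathcal{E}_\psi$ and swap the order of integration and summation (justified by the rapid decay of $\tilde{\psi}$ on vertical lines together with the absolute convergence of the $n$-sum on the support of $\psi$). The inner $y$-integral evaluates to
$$\int_{0}^{\infty}y^{k+s-1}e^{-4\pi ny}\,dy=\frac{\Gamma(k+s)}{(4\pi n)^{k+s}},$$
and the remaining $n$-sum is handled by the Rankin-Selberg identity
$$\sum_{n=1}^{\infty}\frac{\lambda_f(n)^{2}}{n^{s+1}}=\frac{\zeta(s+1)L(s+1,\sym^{2}f)}{\zeta(2s+2)}.$$
Substituting $|a_f(1)|^{2}=\frac{2\pi^{2}}{\Gamma(k)L(1,\sym^{2}f)}$ and changing the integration variable $s\mapsto s-1$, I arrive at
$$\frac{2\pi^{2}}{L(1,\sym^{2}f)}\cdot\frac{1}{2\pi i}\int_{(\sigma+1)}\tilde{\psi}(s-1)\,\frac{\zeta(s)L(s,\sym^{2}f)}{(4\pi)^{s}\zeta(2s)}\,\frac{\Gamma(k+s-1)}{\Gamma(k)}\,ds,$$
which is exactly the right-hand side of the lemma, but on the line $\Re(s)=\sigma+1>1$.

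To reach the line $\Re(s)=1/2$, I would shift the contour, crossing only the simple pole of $\zeta(s)$ at $s=1$ (since $L(s,\sym^{2}f)$ is entire and $\zeta(2s)$ has no zeros in $\Re(s)>1/2$). Using $\tilde{\psi}(0)=\int_{0}^{\infty}\psi(y)\frac{dy}{y}$ and $\zeta(2)=\pi^{2}/6$, the residue at $s=1$ equals
$$\frac{2\pi^{2}}{L(1,\sym^{2}f)}\cdot\frac{L(1,\sym^{2}f)}{4\pi\cdot\pi^{2}/6}\,\tilde{\psi}(0)=\frac{3}{\pi}\tilde{\psi}(0)=\mathbb{E}(\psi),$$
which precisely cancels the $-\mathbb{E}(\psi)$ appearing in the definition of $\mathcal{E}_\psi$. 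The only point requiring care is the vanishing of the horizontal connecting segments when shifting the contour, but this is routine: $\tilde{\psi}(s-1)$ decays faster than any polynomial in $|\Im(s)|$, $\Gamma(k+s-1)/\Gamma(k)$ is controlled by Stirling's formula, and both $\zeta$ and $L(s,\sym^{2}f)$ grow at most polynomially on vertical strips, so no substantive obstacle arises.
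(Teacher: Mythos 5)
Your proof is correct and follows the natural (and essentially unique) path to this identity: Mellin inversion on $\psi$, the Gamma-integral $\int_0^\infty y^{k+s-1}e^{-4\pi ny}\,dy = \Gamma(k+s)(4\pi n)^{-k-s}$, the Rankin--Selberg factorization $\sum_n \lambda_f(n)^2 n^{-w} = \zeta(w)L(w,\sym^2 f)/\zeta(2w)$, the substitution $s\mapsto s-1$, and a contour shift past the simple pole of $\zeta(s)$ at $s=1$, whose residue $\frac{2\pi^2}{4\pi\,\zeta(2)}\tilde{\psi}(0)=\frac{3}{\pi}\tilde{\psi}(0)$ precisely cancels $\mathbb{E}(\psi)$. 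The paper does not reproduce the proof (it cites \cite{Zenz2021QuantumVF}), but this is the same argument.
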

Then we have
\begin{align}
\mu_f(\psi)\coloneqq\mathcal{E}_\psi +\mathcal{S}_\psi +\frac{3}{\pi}\int_0^\infty \psi(y)\frac{dy}{y},
\end{align}		
where
\begin{align}\label{shiftedconvolutiondefi}
\mathcal{S}_\psi\coloneqq|a_f(1)|^2 \int_0^\infty \sum_{n\neq m}\lambda_f(n)\lambda_f(m) (16\pi^2 nm)^{\frac{k-1}{2}} e^{-2\pi (n+m)y}\psi(y)y^{\frac{k}{2}} dy.
\end{align}

\subsection{Reduction to a Shifted Convolution Problem}

Following Lemma $5.2$ of \cite{Zenz2021QuantumVF},
we have the folloing asymptotic formula of  $\mathcal{S}_\psi$ to
evaluate the variance $V(\psi_1, \psi_2)$.

\begin{lem}\label{shiftedconvolutionint} We have
\begin{align}\mathcal{S}_\psi=\frac{\pi}{2 L(1, \sym^2f)}
\sum_{\ell\neq 0} \sum_n \frac{\lambda_f(n)\lambda_f(n+\ell)}{\sqrt{n(n+\ell)}}
\exp\Big( -\frac{k\ell^2}{2(2n+\ell)^2}\Big) \psi\Big(\frac{k}{2\pi(2n+\ell)}\Big)
+O_\psi(k^{-1/2+\varepsilon}).
\end{align}
\end{lem}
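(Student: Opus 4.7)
The plan is to apply Laplace's method to the $y$-integral inside $\mathcal{S}_\psi$ and then combine the result with Stirling's approximation for the $\Gamma(k)$ appearing in $|a_f(1)|^2 = 2\pi^2/(\Gamma(k)L(1,\sym^2 f))$. To begin, I would reindex the off-diagonal by writing $m = n+\ell$ with $\ell \neq 0$, and set $N := n+m = 2n+\ell$. Swapping the $(n,m)$-summation with the $y$-integration gives
\begin{equation*}
\mathcal{S}_\psi = |a_f(1)|^2\sum_{\ell\neq 0}\sum_{n}\lambda_f(n)\lambda_f(n+\ell)\bigl(16\pi^2 n(n+\ell)\bigr)^{(k-1)/2}\,I(N),
\end{equation*}
where $I(N)$ denotes the resulting one-dimensional integral in $y$. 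Its integrand has a sharply concentrated maximum near the saddle point $y_0 = k/(2\pi N)$, where the exponential decay $e^{-2\pi Ny}$ balances the polynomial growth in $y$; the compact support of $\psi$ then forces $N \asymp k$ throughout the effective range.

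Next, I would evaluate $I(N)$ by Mellin inversion. Substituting $\psi(y) = \frac{1}{2\pi i}\int_{(\sigma)}\tilde\psi(s)y^s\,ds$ and using $\int_0^\infty e^{-2\pi Ny}y^{k-1+s}\,dy = \Gamma(k+s)/(2\pi N)^{k+s}$ yields $I(N) = \frac{1}{2\pi i}\int_{(\sigma)}\tilde\psi(s)\Gamma(k+s)(2\pi N)^{-(k+s)}\,ds$. The rapid decay of $\tilde\psi$ on vertical lines, combined with the uniform Stirling-type expansion $\Gamma(k+s)/\Gamma(k) = k^s\bigl(1+O(s^2/k)\bigr)$, then gives
\begin{equation*}
I(N) = \frac{\Gamma(k)}{(2\pi N)^k}\,\psi\!\left(\frac{k}{2\pi N}\right)\bigl(1+O_\psi(1/k)\bigr),
\end{equation*}
which is the analytic content of the Laplace saddle-point approximation.

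Plugging this back in, the $\Gamma(k)$ factors cancel against $|a_f(1)|^2$, and the contribution of each $(n,\ell)$ pair becomes
\begin{equation*}
\frac{\pi}{L(1,\sym^2 f)\,N}\,\psi\!\left(\frac{k}{2\pi N}\right)\left(\frac{2\sqrt{n(n+\ell)}}{N}\right)^{k-1}\bigl(1+O_\psi(1/k)\bigr).
\end{equation*}
The match with the claim reduces to two algebraic observations: (i) $2\sqrt{n(n+\ell)}/N = \sqrt{1-\ell^2/N^2}$, so Taylor-expanding the logarithm gives $(1-\ell^2/N^2)^{(k-1)/2} = \exp(-k\ell^2/(2N^2))\bigl(1+O(k\ell^4/N^4)\bigr)$, producing exactly the Gaussian factor in the lemma; (ii) $1/N = 1/(2\sqrt{n(n+\ell)})\bigl(1+O(\ell^2/N^2)\bigr)$, which converts the amplitude into the stated $1/\sqrt{n(n+\ell)}$.

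The main obstacle will be showing that the aggregated error is truly $O_\psi(k^{-1/2+\varepsilon})$. The Gaussian factor $\exp(-k\ell^2/(2N^2))$ exponentially truncates the $\ell$-sum at $|\ell| \ll \sqrt{k\log k}$; outside this range the contribution is $O(k^{-A})$ for every $A>0$. On the truncated range, each of the relative errors above has size $O((\log k)/k)$. A direct bound using Deligne's $|\lambda_f(n)| \ll n^\varepsilon$ and $n \asymp k$ shows that the full main term has trivial size $O(k^{1/2+\varepsilon})$ (summing $O(k)$ values of $n$ and $O(\sqrt{k\log k})$ effective values of $\ell$, each of amplitude $\asymp 1/k$), so multiplying by the $O(1/k)$ relative error produces a remainder of size $O_\psi(k^{-1/2+\varepsilon})$, as required.
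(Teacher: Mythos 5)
Your argument is correct, and it is the expected saddle-point computation: the paper itself does not give a proof of this lemma but simply imports it from Lemma~5.2 of Zenz \cite{Zenz2021QuantumVF}, which proceeds along the same lines (Mellin-inverting $\psi$, applying a uniform Stirling expansion for $\Gamma(k+s)/\Gamma(k)$, and converting the $(1-\ell^2/N^2)^{(k-1)/2}$ factor into the Gaussian). Two cosmetic remarks: the power $y^{k/2}$ in the paper's display \eqref{shiftedconvolutiondefi} is a typographical slip for $y^{k-1}$, which you have silently corrected; and your Taylor expansion in (i) omits the $O(\ell^2/N^2)$ contribution coming from replacing $k-1$ by $k$ in the exponent, but as it is of the same size $O(k^{-1+\varepsilon})$ on the effective range $|\ell|\ll\sqrt{k\log k}$ the final error $O_\psi(k^{-1/2+\varepsilon})$ is unaffected.
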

\begin{remark}
Since $\psi$ is smooth compactly supported on $\mathbb{R}^+$,
we see that $(2n+\ell) \asymp k$. More precisely,
if $\text{supp}\;\psi \subset (\alpha^{-1}, \alpha)$, then $2\pi\alpha^{-1} k\leq (2n+\ell) \leq 2\pi\alpha k$,
noting that $\alpha$ is a parameter which is only related to $\psi$ and can be taken
suitably under the condition $\alpha>1$. The exponential factor limits
the size of $\ell$ as otherwise we have rapid decay.
It follows that $\ell \ll k^{1/2+\varepsilon}$ and therefore $n \asymp k$.
\end{remark}
Subsequently, we only need to consider the case when $m>n$ and thus $\ell>0$ as the case with $m<n$ is exactly the same upon relabelling the variables.
\subsection{Trace formula}
We will use the following averaged Petersson trace formula.
\begin{lem}\cite[Iwaniec, Luo, Sarnak, Lemma 10.1]{ILS00}
For any positive numbers $m,n$ we have
\begin{align}\label{Petersson}\sum_{k \equiv 0 \mod{2}} &2h\Big(\frac{k-1}{K}\Big) \frac{2\pi^2}{k-1}\sum_{f\in H_k}\frac{ \lambda_f(m)\lambda_f(n)}{L(1, \sym^2f)}=\\
=&\hat{h}(0)K1_{m=n} -\pi^{1/2}(mn)^{-1/4}K \Im \Big(e^{-2\pi i/8}\sum_{c=1}^\infty \frac{S(m,n;c)}{\sqrt{c}} e_c(2\sqrt{mn})\hbar\Big(\frac{cK^2}{8\pi\sqrt{mn}}\Big)\Big)+ \nonumber \\
&+ O\Big(\frac{\sqrt{mn}}{K^4} \cdot \int_{-\infty}^\infty v^4 |\hat{h}(v)|dv+1_{m=n}\Big), \nonumber
\end{align}
where $\hat{h}$ denotes the Fourier transform of $h$ and $\hbar(v)=\int_0^\infty \frac{h(\sqrt{u})}{\sqrt{2\pi u}} e^{iuv} du$.
\end{lem}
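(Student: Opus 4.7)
The plan is to derive the stated formula from the classical Petersson trace formula at fixed weight $k$. Using the relation $|a_f(1)|^2 = 2\pi^2/(\Gamma(k) L(1, \sym^2 f))$ already quoted, the unweighted Petersson formula in the paper's normalization reads
\begin{align*}
\frac{2\pi^2}{k-1} \sum_{f \in H_k} \frac{\lambda_f(m) \lambda_f(n)}{L(1, \sym^2 f)} = \delta_{m=n} + 2\pi\, i^{-k} \sum_{c=1}^\infty \frac{S(m,n;c)}{c}\, J_{k-1}\!\left( \frac{4\pi \sqrt{mn}}{c} \right).
\end{align*}
Multiplying both sides by $2h((k-1)/K)$ and summing over even $k \geq 2$ splits the right-hand side into a diagonal Kronecker contribution and an off-diagonal Kloosterman contribution. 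The diagonal is handled by Poisson summation in $k$, which yields $\sum_{k\geq 2\text{ even}} 2h((k-1)/K) = K\hat h(0) + O(K^{-A})$ for any $A>0$, producing precisely the $\hat h(0) K \cdot 1_{m=n}$ main term on the right-hand side of the lemma.

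The essential work lies in evaluating the Bessel average
\begin{align*}
\Phi(x) := \sum_{k \geq 2 \text{ even}} 2 h\!\left(\frac{k-1}{K}\right) i^{-k} J_{k-1}(x), \qquad x = \frac{4\pi\sqrt{mn}}{c},
\end{align*}
asymptotically as $K \to \infty$. The strategy is to substitute the integral representation $J_{k-1}(x) = \tfrac{1}{2\pi}\int_{-\pi}^{\pi} e^{ix \sin\theta - i(k-1)\theta} \, d\theta$, interchange the $k$-sum with the $\theta$-integral, and apply Poisson summation to the resulting inner sum in $k$ (folding $i^{-k}e^{-i(k-1)\theta}$ into a phase). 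This localizes $\theta$ near the stationary points $\theta = \pm\pi/2$ of the phase $x\sin\theta$. A standard stationary-phase analysis at these two points produces the $e^{\mp i\pi/4}$ factors and the $x^{-1/2}$ weight, and a change of variable turning the Poisson frequency into the variable $u$ of the $\hbar$-transform reshapes the output into $-\pi^{1/2}(mn)^{-1/4}K\,\Im\!\left(e^{-2\pi i/8}\,\hbar(cK^2/(8\pi\sqrt{mn}))\right)$, as required. The imaginary part appears naturally because the symmetric contributions at $\theta = \pi/2$ and $\theta = -\pi/2$ reinforce each other under taking $\Im$, while their real parts cancel.

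The main obstacle will be to control $\Phi(x)$ uniformly in $c$, since $x$ varies over a huge dynamical range as $c$ grows. For $c \ll K^2/\sqrt{mn}$ the Bessel argument $x$ is comparatively large and the stationary-phase description is valid and supplies the claimed main term. For $c$ much larger, the tail bound $J_{k-1}(x) \ll (x/2)^{k-1}/\Gamma(k)$ combined with the rapid decay of $h$ makes the contribution essentially negligible; but one still has to cleanly sum these tails so that they fit inside the stated error. Quantifying the transition region and bookkeeping every remainder so that they aggregate into the claimed error $O(\sqrt{mn}/K^4 \int_{-\infty}^\infty v^4|\hat h(v)|\,dv + 1_{m=n})$ is the delicate step: the fourth-power weight on $v$ arises from four integrations by parts in the Poisson-transformed $k$-sum, each producing a $K^{-1}$ saving, and the $1_{m=n}$ error absorbs the endpoint discrepancy between a sum over even $k \geq 2$ and a genuine continuous integral.
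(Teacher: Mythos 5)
The paper does not prove this lemma; it is quoted verbatim from Iwaniec--Luo--Sarnak \cite{ILS00}, so there is no internal proof to compare against. Your sketch is, however, a faithful outline of the ILS argument: reduce via the Petersson formula at fixed $k$ to the Bessel sum $\Phi(x) = \sum_{k\equiv 0\,(2)} 2\, i^{-k}\, h((k-1)/K)\, J_{k-1}(x)$, insert the integral representation of $J_{k-1}$, apply Poisson summation in $k$ to localize the $\theta$-integral near $\theta = \pm\pi/2$, and then expand the quadratic phase to produce the Fresnel-type transform $\hbar$. Unwinding $x = 4\pi\sqrt{mn}/c$ and noting $e_c(2\sqrt{mn}) = e^{ix}$ recovers the stated off-diagonal term with its $(mn)^{-1/4}/\sqrt{c}$ weight and $e^{-i\pi/4}$ phase (equivalently $\Phi(x) = -\tfrac{K}{\sqrt{x}}\,\Im\big(e^{-i\pi/4} e^{ix}\, \hbar(K^2/(2x))\big)$ up to the stated error).

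Two imprecisions would need to be repaired in a complete write-up. First, the range $c \ll K^2/\sqrt{mn}$ that you give for the stationary-phase description to be valid is not the correct general threshold: $J_{k-1}(x)$ is oscillatory precisely when $x = 4\pi\sqrt{mn}/c \gtrsim K$, i.e.\ $c \lesssim \sqrt{mn}/K$, and the factor $\hbar(cK^2/(8\pi\sqrt{mn}))$ decays once its argument exceeds $K^{\varepsilon}$, i.e.\ once $c \gg \sqrt{mn}/K^{2}$; your stated threshold coincides with these only in the regime $mn \asymp K^4$ used later in the paper, and is not what the lemma (stated for all positive $m,n$) requires. Second, the error $O\big(\sqrt{mn}\,K^{-4}\int v^4 |\hat h(v)|\,dv\big)$ does not follow merely from ``four integrations by parts'': after bounding the local Bessel-expansion remainder, one must still sum over $c$ (using the Weil bound on $S(m,n;c)$ and the exponential decay of $J_{k-1}(x)$ when $x \ll K$) and over $k$ to extract the $\sqrt{mn}$ prefactor, and this power counting is precisely what ILS carry through in detail. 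Neither issue undermines your overall strategy, which matches the cited source.
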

\begin{remark}\label{decay}
We denote $g(x)=h\left(x-\frac{K}{G}\right)$, then g is also a smooth compactly supported function,
and if we suppose $\text{supp}\;h\subset(a,b)$, then we have $\text{supp}\;g\subset\left(a+\frac{K}{G},b+\frac{K}{G}\right)$.
Integrating by parts many times shows that $\bar{g}(v) \ll_{A}v^{-A}$ for any $A>0$.

In particular, the second term on the right hand side of \eqref{Petersson} is
absorbed into the error term if $cK^2/\sqrt{mn}>K^\varepsilon$.
In our case $mn$ will be of size $K^4$ and so this effectively
restricts the range of $c$ to $c \ll K^\varepsilon$.
\end{remark}

\subsection{Variance Computation}\label{VarianceComputation}
Now we compute the main term of the variance $V(\psi_1, \psi_2)$, which is given by
the averaged shifted convolution sum
\begin{equation}\label{Variance}
\mathcal{M}(\psi_1, \psi_2)=\sum_{k \equiv 0 \mod{2}}g\Big(\frac{k-1}{G}\Big) \sum_{f \in H_k} L(1, \sym^2f) S_{\psi_1}S_{\psi_2}
\end{equation}
with
 \begin{align*}S_{\psi}=&\frac{\pi}{2L(1,\sym^2f)}\sum_{\ell}\sum_{n\in\mathbb{N^+}}\frac{\lambda_f(n)\lambda_f(n+\ell)}{\sqrt{n(n+\ell)}}\exp\Big(\frac{-k\ell^2}{2(2n+\ell)^2}\Big)\psi\Big(\frac{k}{2\pi (2n+\ell)}\Big)+O_\psi(k^{-1/2+\epsilon})\\
= &\frac{\pi}{2L(1,\sym^2f)}\sum_{\substack{\ell, d \\ d|\ell}} \sum_{n} \frac{\lambda_f(n(n+\ell/d))}{\sqrt{d^2 n(n+\ell/d)}} \exp\Big( \frac{-k \ell^2}{2(2nd+ \ell)^2}\Big)\psi \Big( \frac{k}{2\pi (2nd+\ell)}\Big) +O_\psi(k^{-1/2+\epsilon})\\
 = &\frac{\pi}{2L(1,\sym^2f)} \sum_{d\geq 1} \sum_{m\neq 0} \sum_{n\geq1} \frac{\lambda_f(n(n+m))}{d\big(n(n+m)\big)^{1/2}}\exp\Big(-\frac{km^2}{2(2n+m)^2}\Big)\psi\Big(\frac{k}{2\pi d (2n+m)}\Big)+O_\psi(k^{-1/2+\varepsilon}).
 \end{align*}
 \begin{remark}
 It's easy to observe that $d(2n+m) \asymp k$ and $(dm)^2\ll k^{1/2+\varepsilon}.$ More precisely,   $2\pi\alpha^{-1} k\leq d(2n+m) \leq 2\pi\alpha k$  by supposing  $\text{supp}\;\psi \subset (\alpha^{-1}, \alpha)$.
\end{remark}

After expanding $S_{\psi_1}, S_{\psi_2}$ we see that the main term of  $\mathcal{M}(\psi_1, \psi_2)$ is equal to
$$ \sum_{\substack{n_1,n_2\\ d_1,d_2 \\m_1,m_2}} \sum_{k\equiv 0\mod{2}}2g^*\Big(\frac{k-1}{G}\Big)  \frac{2\pi^2}{k-1} \sum_{f\in H_k}\frac{1}{L(1, \sym^2f)} \frac{\lambda_f(n_1(n_1+m_1))\lambda_f(n_2(n_2+m_2))}{d_1d_2\big(n_1(n_1+m_1)n_2(n_2+m_2)\big)^{1/2}}$$
with
\begin{align*}
g^*(t)&=g^*_{n_1,n_2,d_1,d_2,m_1,m_2,K}(t)\\
&=g(t)\frac{tG}{16}\psi_1\Big(\frac{tG+1}{2\pi d_1(2n_1+m_1)}\Big)\psi_2\Big(\frac{tG+1}{2\pi d_2(2n_2+m_2)}\Big)\exp\Big(-\frac{(tG+1)m_1^2}{2(2n_1+m_1)^2}-\frac{(tG+1)m_2^2}{2(2n_2+m_2)^2}\Big).
\end{align*}
We can now apply the averaged Petersson trace formula (Lemma \ref{Petersson}) so that $\mathcal{M}(\psi_1, \psi_2)= \mathcal{D} + \mathcal{OD}$ with the diagonal
\begin{align}\label{diagonalterm}\mathcal{D}=G\sum_{\substack{n_1,n_2\\ d_1,d_2 \\m_1,m_2}}\frac{1_{n_1(n_1+m_1)=n_2(n_2+m_2)}}{d_1d_2\big(n_1(n_1+m_1)n_2(n_2+m_2)\big)^{1/2}} \cdot \widehat{g^*}(0)
\end{align}
and the off-diagonal
\begin{align}
\mathcal{OD}=- \sqrt{\pi} G \Im\bigg(&e^{-\frac{\pi i}{4}}\sum_{\substack{n_1,n_2\\ d_1,d_2 \\m_1,m_2}} \sum_{c=1}^\infty \frac{S(n_1(n_1+m_1),n_2(n_2+m_2);c)}{\sqrt{c}} e_c(2\sqrt{n_1(n_1+m_1)n_2(n_2+m_2)})\cdot \\
\times&\frac{1}{d_1d_2 \big(n_1(n_1+m_1)n_2(n_2+m_2)\big)^{3/4}}\cdot  \bar{g}^*\Big(\frac{c G^2}{8\pi \sqrt{n_1(n_1+m_1)n_2(n_2+m_2)}}\Big)\bigg),\nonumber
\end{align}
where $\bar{g}^*(v)=\int_0^\infty \frac{g^*(\sqrt{u})}{\sqrt{2\pi u}}e^{iuv} du$.
Then we have
\begin{equation}\label{main}
\mathcal{M}(\psi_1,\psi_2)=\mathcal{D}+\mathcal{OD}.
\end{equation}

We first evaluate the diagonal term.
 \subsection{Evaluation of the diagonal term}
We wil prove the following result.
 \begin{lem}\label{diagonal} Let $\mathcal{D}$ be as in \eqref{diagonalterm}. We have
\begin{align*}
\mathcal{D}=&K^{\frac{1}{2}}G\log K\cdot \frac{\sqrt{2}\pi}{32}\tilde{\psi_1}(0)\tilde{\psi_2}(0) \int_0^\infty \frac{h(\sqrt{u})}{\sqrt{2\pi u}}\Big(1+\sqrt{u}\frac{G}{K}\Big)^{1/2} du \\
+&K^{\frac{1}{2}}G\cdot \frac{\sqrt{2}\pi}{32}\tilde{\psi_1}(0)\tilde{\psi_2}(0)\int_0^\infty \frac{h(\sqrt{u})}{\sqrt{2\pi u}}\Big(1+\sqrt{u}\frac{G}{K}\Big)^{1/2}\log\Big(1+\sqrt{u}\frac{G}{K}\Big) du+\\
+&K^{\frac{1}{2}}G\cdot\int_0^\infty \frac{h(\sqrt{u})}{\sqrt{2\pi u}}\Big(1+\sqrt{u}\frac{G}{K}\Big) du \Big(\frac{\sqrt{2}\pi}{16}\Big(\frac{3}{2} \gamma - \log (4\pi)\Big) \tilde{\psi_1}(0)\tilde{\psi_2}(0) +\frac{\sqrt{2}\pi}{16} \tilde{\psi_1}(0)\tilde{\psi_2}'(0) \Big)\\
+&K^{\frac{1}{2}}G\cdot\frac{\sqrt{2}\pi}{16} \int_0^\infty \frac{h(\sqrt{u})}{\sqrt{2\pi u}} \Big(1+\sqrt{u}\frac{G}{K}\Big)^{1/2}du \cdot  \frac{1}{2\pi i} \int_{(1)} \tilde{\psi}_1(-s_2)\tilde{\psi_2}(s_2)\zeta(1-s_2)\zeta(1+s_2)ds_2\\
 +&O_{\psi_1, \psi_2}(K^{2+\varepsilon}G^{-1}),
\end{align*}
when $G=K^{\theta}$ is suffeciently large and $\theta>\frac{3}{4}$ holds.

Moreover,
\begin{align*}
\mathcal{D}=&K^{\frac{1}{2}}G\log K\cdot \frac{\sqrt{2}\pi}{32}\tilde{\psi_1}(0)\tilde{\psi_2}(0) \int_0^\infty \frac{h(\sqrt{u})}{\sqrt{2\pi u}} du \\
+&K^{\frac{1}{2}}G\cdot\int_0^\infty \frac{h(\sqrt{u})}{\sqrt{2\pi u}} du \Big(\frac{\sqrt{2}\pi}{16}\Big(\frac{3}{2} \gamma - \log (4\pi)\Big) \tilde{\psi_1}(0)\tilde{\psi_2}(0) +\frac{\sqrt{2}\pi}{16} \tilde{\psi_1}(0)\tilde{\psi_2}'(0) \Big)\\
+&K^{\frac{1}{2}}G\cdot\frac{\sqrt{2}\pi}{16} \int_0^\infty \frac{h(\sqrt{u})}{\sqrt{2\pi u}} du \cdot  \frac{1}{2\pi i} \int_{(1)} \tilde{\psi}_1(-s_2)\tilde{\psi_2}(s_2)\zeta(1-s_2)\zeta(1+s_2)ds_2\\
 +&O_{\psi_1, \psi_2}(K^{2+\varepsilon}G^{-1}+K^{-\frac{1}{2}+\varepsilon}G^{2}),
\end{align*}
 \end{lem}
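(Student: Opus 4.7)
The plan is to turn the diagonal constraint $n_1(n_1+m_1)=n_2(n_2+m_2)=N$ into a Dirichlet series identity and then extract the main terms via a residue analysis. First, I Mellin-invert both $\psi_i$, writing
$$\psi_i\!\left(\tfrac{tG+1}{2\pi d_i(2n_i+m_i)}\right)=\frac{1}{2\pi i}\int_{(\sigma_i)}\tilde\psi_i(s_i)\left(\tfrac{tG+1}{2\pi d_i(2n_i+m_i)}\right)^{s_i}ds_i,$$
so that the $d_i$-sums factor out as $\zeta(1+s_1)\zeta(1+s_2)$ and the remaining $(n_i,m_i)$-dependence is concentrated in $(2n_i+m_i)^{-s_i}$. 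Setting $u_i=n_i$ and $v_i=n_i+m_i$, the constraint becomes $u_iv_i=N$ with $u_i<v_i$; also $2n_i+m_i=u_i+v_i$ and $m_i^2/(2n_i+m_i)^2=1-4N/(u_i+v_i)^2$.

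Second, I separate $u_i$ from $v_i$ via the Mellin--Barnes representation
$$(u_i+v_i)^{-s_i}=\frac{1}{2\pi i\,\Gamma(s_i)}\int_{(c_i)}\Gamma(w_i)\Gamma(s_i-w_i)\,u_i^{-w_i}v_i^{-s_i+w_i}\,dw_i,$$
after which the inner sum over $u_iv_i=N$ reduces to $N^{-s_i+w_i}\sigma_{s_i-2w_i}(N)$ (times a symmetry factor for the strict inequality $u_i<v_i$), and Ramanujan's identity
$$\sum_N\frac{\sigma_a(N)\sigma_b(N)}{N^\gamma}=\frac{\zeta(\gamma)\zeta(\gamma-a)\zeta(\gamma-b)\zeta(\gamma-a-b)}{\zeta(2\gamma-a-b)}$$
collapses the $N$-sum to a four-zeta product. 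For the $t$-integral $\widehat{g^*}(0)$, substituting $u=((tG+1)/K)^2$ recasts $g(t)=h(t-K/G)$ and the prefactor $tG(tG+1)^{s_1+s_2}/16$ in terms of $K^{1+s_1+s_2}h(\sqrt u)$ and powers of $(1+\sqrt u G/K)$; the exponentials, after a further Mellin--Barnes expansion, reinject $(u_i+v_i)$- and $N$-dependence that collapses again through Ramanujan's identity, ultimately producing the $(1+\sqrt u G/K)^{1/2}$ factors of the first asymptotic.

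Third, I shift the contours in $s_1,s_2,w_1,w_2$ leftward. The principal polar configuration is the triple pole at $s_1=s_2=0$ coming from $\zeta(1+s_1)\zeta(1+s_2)\zeta(\gamma)$: combining $\zeta(1+s)=1/s+\gamma+O(s)$ with $(2\pi)^{-s_1-s_2}=1-(s_1+s_2)\log(2\pi)+\cdots$ and the Taylor expansion of $\tilde\psi_i(s_i)$ around $0$ produces the main term $K^{1/2}G\log K\cdot\tilde\psi_1(0)\tilde\psi_2(0)$ together with the constant $\tfrac{3}{2}\gamma-\log(4\pi)$ and the cross-term $\tilde\psi_1(0)\tilde\psi_2'(0)$. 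The remaining integral $\int_{(1)}\tilde\psi_1(-s_2)\tilde\psi_2(s_2)\zeta(1-s_2)\zeta(1+s_2)\,ds_2$ arises as a secondary residue in $s_1$ at $s_1=-s_2$, matching the paired-pole structure of the Ramanujan numerator. The residual contour integrals are controlled by the rapid decay of $\tilde\psi_i$ and of the gamma quotients, yielding the $O_{\psi_1,\psi_2}(K^{2+\varepsilon}G^{-1})$ error.

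The most delicate step will be the simultaneous bookkeeping of the four contour shifts when multiple poles coalesce at the origin, since this is where the precise numerical coefficients $\sqrt{2}\pi/32$, $\tfrac{3}{2}\gamma-\log(4\pi)$, and the appearance of $\tilde\psi_i'(0)$ are fixed. The second asymptotic in the lemma then follows from the first by Taylor expanding $(1+\sqrt u G/K)^{1/2}=1+O(\sqrt u G/K)$ and $(1+\sqrt u G/K)\log(1+\sqrt u G/K)=O(\sqrt u G/K)$ on the compact support of $h$; each expansion costs a relative $O(G/K)$ against the main term $K^{1/2}G\log K$, producing the additional error $O(K^{-1/2+\varepsilon}G^2)$.
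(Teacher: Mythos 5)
Your overall plan is a genuinely different route from the paper's, and it contains a gap that is hard to repair without essentially reverting to the paper's method.

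The paper's proof does \emph{not} treat the constraint $n_1(n_1+m_1)=n_2(n_2+m_2)$ as a full divisor problem. Its first and most important step is to invoke the sparsity result from \cite[Lemma~5.4]{Zenz2021QuantumVF}: there are at most $K^{1+\varepsilon}$ solutions with $(n_1,m_1)\neq(n_2,m_2)$, so those contribute only $O(GK^\varepsilon)$, and the sum collapses to the true diagonal $n_1=n_2=n$, $m_1=m_2=m$. After that one has a single $(n,m,d_1,d_2)$ sum. Next the paper Taylor-expands the weight, using $m/n\ll K^{-1/2}$, replacing $\psi_i\big(\tfrac{\sqrt{u}G+1}{2\pi d_i(2n+m)}\big)$ by $\psi_i\big(\tfrac{\sqrt{u}G}{4\pi nd_i}\big)$ and the exponential by $\exp\!\big(-\tfrac{\sqrt{u}G\,m^2}{4n^2}\big)$, each with error $O(m/n)$. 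Only then is Mellin inversion applied (three variables $s_1,s_2,s_3$, with the exponential producing a clean $\Gamma(s_3)$), and the $d_1,d_2,n,m$ sums produce the simple product $\zeta(1+s_1)\zeta(1+s_2)\zeta(2+s_1+s_2-2s_3)\zeta(2s_3)$. The residue analysis is then a short three-variable contour shift.

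Your proposal bypasses both of these simplifications and instead tries to carry the full constraint and the full weight through a Mellin--Barnes / Ramanujan machinery. Two concrete problems arise. First, you omit the reduction to the true diagonal entirely; the Ramanujan identity $\sum_N\sigma_a(N)\sigma_b(N)/N^\gamma$ automatically folds in the off-diagonal solutions, and you would need to verify separately that those poles do not contribute extra $\log$-powers (they do not, but only because of the sparsity bound you never invoke). Second, and more seriously, your treatment of the exponential factor is not viable as stated. Writing $m_i^2/(2n_i+m_i)^2 = 1-4N/(u_i+v_i)^2$ gives
$\exp\!\big(-\alpha\,m_i^2/(2n_i+m_i)^2\big)=e^{-\alpha}\exp\!\big(4\alpha N/(u_i+v_i)^2\big)$
with $\alpha\asymp K$ and $4N/(u_i+v_i)^2\in(0,1]$, so the second factor is $\exp$ of a large \emph{positive} quantity and has no convergent Mellin transform along a vertical line; you cannot ``Mellin--Barnes expand'' it in the way you describe. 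The paper's Taylor expansion to $\exp(-\sqrt{u}G\,m^2/4n^2)$ (legitimate because $m/n\ll K^{-1/2}$) is precisely the step that turns this into a function with a clean Mellin transform $\Gamma(s_3)(\sqrt{u}G\,m^2/4n^2)^{-s_3}$, and your plan has no substitute for it.

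Your remarks about where the main terms come from (the coalescing poles at $s_1=s_2=0$ and the secondary pole at $s_1=-s_2$ giving $\int_{(1)}\tilde\psi_1(-s_2)\tilde\psi_2(s_2)\zeta(1-s_2)\zeta(1+s_2)\,ds_2$) are correct in spirit and consistent with the paper's residue computation. Your derivation of the second asymptotic from the first --- expanding $(1+\sqrt{u}G/K)^{1/2}$ and $(1+\sqrt{u}G/K)\log(1+\sqrt{u}G/K)$ on the compact support of $h$, each costing a relative $O(G/K)$ and yielding the additional error $O(K^{-1/2+\varepsilon}G^2)$ --- is also correct. But the central part of the proof (getting from the definition of $\mathcal{D}$ to the residue stage) is missing the two reductions that make the computation tractable, and the exponential Mellin--Barnes step would fail as written.
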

 \begin{proof}
 To evaluate $\mathcal{D}$ (see \eqref{diagonalterm}), we first show that most solutions
 to $n_1(n_1+m_1)=n_2(n_2+m_2)$ arise from the diagonal, i.e., $n_1=n_2$ and $m_1=m_2$.
 In the proof of  \cite[Lemma 5.4]{Zenz2021QuantumVF}, Peter has shown that
 there are at most $K^{1+2\varepsilon+\varepsilon'}$ off-diagonal terms.
 These off-diagonal terms contribute to $\mathcal{D}$ at most
\begin{align*}
G \sum_{\substack{n_1, n_2, m_1, m_2\\ n_1\neq n_2, m_1\neq m_2}}
\sum_{d_1, d_2} \frac{1_{n_1(n_1+m_1)=n_2(n_2+m_2)}}{d_1d_2\big(n_1(n_1+m_1)n_2(n_2+m_2)\big)^{1/2}}
\cdot \widehat{g^*}(0) \ll_{\psi_1, \psi_2} G K^{2\varepsilon +\varepsilon'}.
\end{align*}
Hence,
\begin{align*}
\mathcal{D}=&\frac{\sqrt{2\pi}G^2}{16} \sum_{n, d_1, d_2, m}\frac{1}{d_1d_2 n(n+m)}\cdot \\
&\times \int_0^\infty \frac{g(\sqrt{u})\sqrt{u}}{\sqrt{2\pi u}}\psi_1\Big(\frac{\sqrt{u}G+1}{2\pi d_1(2n+m)}\Big)\psi_2\Big(\frac{\sqrt{u}G+1}{2\pi d_2(2n+m)}\Big) \exp\Big(- \frac{(\sqrt{u}G+1) m^2}{(2n
+m)^2}\Big) du\\
&+O_{\psi_1, \psi_2}( G K^{2\varepsilon +\varepsilon'}).
\end{align*}

Since $m_i d_i \ll K^{1/2+\varepsilon}$ and $m_i/n_i \ll K^{-1/2+\varepsilon}$ for $i=1,2$,
we can simplify the expression for the off-diagonal $\mathcal{D}$ by Taylor expansion.
Notice that we only do this when $\sqrt{u} \in \text{supp}\; g$, as otherwise $g$ becomes $0$. We compute
\begin{align*}
\psi_i\Big(\frac{\sqrt{u}G+1}{2\pi d_1(2n+m)}\Big)
&=\psi_i\Big(\frac{\sqrt{u}G}{4\pi nd_1}\Big)+O\left(\frac{m}{n}\right), \\
\exp\Big(- \frac{(\sqrt{u}G+1) m^2}{(2n
+m)^2}\Big)&=\exp\Big(- \frac{\sqrt{u}G m^2}{4n
^2}\Big)+O\left(\frac{m}{n}\right).
\end{align*}
Thus \eqref{diagonalterm} becomes
\begin{align*}\mathcal{D}=&\frac{\sqrt{2\pi}G^2}{16} \sum_{n, d_1,d_2, m} \frac{1}{d_1d_2n^2} \int_0^\infty \frac{ g(\sqrt{u})\sqrt{u}}{\sqrt{2\pi u}} \psi_1\Big(\frac{\sqrt{u}G}{4\pi nd_1}\Big)\psi_2\Big(\frac{\sqrt{u}G}{4\pi n d_2}\Big)\exp\Big(-\frac{\sqrt{u}Gm^2}{4n^2}\Big)du\\
&+O_{\psi_1, \psi_2}(G K^{\varepsilon}).
\end{align*}
To evaluate the main term of $\mathcal{D}$ asymptotically we perform
an  Mellin inversion on the smooth compactly supported functions $\psi_1, \psi_2$
and the exponential function and then shift the contours. The main term of $\mathcal{D}$ is equal to
\begin{align*}
\frac{\sqrt{2\pi} G^2}{16} \frac{1}{(2\pi i)^3}& \int_{(1/2+\varepsilon)}\int_{(1)}\int_{(1)} \int_0^\infty \frac{g(\sqrt{u})\sqrt{u}}{\sqrt{2\pi u}}\tilde{\psi}_1(s_1)\tilde{\psi_2}(s_2)\Gamma(s_3) \\
&\times \sum_{n_1, d_1,d_2,m_1} \frac{1}{d_1d_2n_1^2}\Big(\frac{\sqrt{u}G}{4\pi n_1 d_1}\Big)^{s_1}\Big(\frac{\sqrt{u}G}{4\pi n_1d_2}\Big)^{s_2}\Big(\frac{4n_1^2}{\sqrt{u}Gm_1^2}\Big)^{s_3} du ds_1ds_2ds_3.
\end{align*}
This in turn can be rewritten as
\begin{align*}\frac{\sqrt{2\pi} G^2}{16} \frac{1}{(2\pi i)^3}& \int_{(1/2+\varepsilon)}\int_{(1)}\int_{(1)} \int_0^\infty \frac{g(\sqrt{u})\sqrt{u}}{\sqrt{2\pi u}}u^{\frac{s_1+s_2-s_3}{2} } G^{s_1+s_2-s_3} (4\pi)^{-s_1-s_2}4^{s_3}\tilde{\psi}_1(s_1)\tilde{\psi_2}(s_2)\\
&\times \Gamma(s_3)\zeta(1+s_1)\zeta(1+s_2)\zeta(2+s_1+s_2-2s_3)\zeta(2s_3) duds_1ds_2ds_3.
\end{align*}

We start by shifting the contour from $\Re(s_3)=1/2+\varepsilon$ to $\Re(s_3)=100$.
The integral on the new line $\Re(s_3)=100$ is negligible by
the rapid decay of $\tilde{\psi_1}(s_1), \tilde{\psi_2}(s_2)$
and the Gamma function (it contributes no more than $O_{\psi_1, \psi_2}(G^{-10})$). The simpe pole at $s_3=1/2+s_1/2+s_2/2$ yields the residue
\begin{align}\label{diagonalstep4}\frac{\sqrt{2\pi} G^2}{16} \frac{1}{(2\pi i)^2}&\int_{(1)}\int_{(1)} \int_0^\infty \frac{g(\sqrt{u})\sqrt{u}}{\sqrt{2\pi u}}u^{\frac{-1+s_1+s_2}{4} }  G^{\frac{-1+s_1+s_2}{2} }   (4\pi)^{-s_1-s_2}4^{\frac{1+s_1+s_2}{2} } \\
&\times \tilde{\psi}_1(s_1)\tilde{\psi_2}(s_2)\Gamma(\frac{1+s_1+s_2}{2} )\zeta(1+s_1)\zeta(1+s_2)\frac{1}{2}\zeta(1+s_1+s_2)duds_1ds_2. \nonumber
\end{align}

Next we move the line $\Re(s_1)=1$ to $\Re(s)=-2+\varepsilon$ (stopping just before the
pole of the Gamma function), picking up simple poles at $s_1=0$ and $s_1=-s_2$ .
We use again the rapid decay of $\tilde{\psi_1}(s_1), \tilde{\psi_2}(s_2)$ to show that
the integral over the new line is bounded by $O_{\psi_1, \psi_2}(K^{2+\varepsilon}G^{-1})$.
At $s_1=0$ the residue is
\begin{align}\label{maintermdiagonal}\frac{\sqrt{2\pi} G^\frac{3}{2}}{16} \frac{1}{2\pi i} \int_{(1)}& \int_0^\infty \frac{g(\sqrt{u})u^{\frac{1}{4}}}{\sqrt{2\pi u}}u^{\frac{s_2}{4} } G^{\frac{s_2}{2}}  (2\pi)^{-s_2}  \tilde{\psi}_1(0)\tilde{\psi_2}(s_2)\Gamma(\frac{1+s_2}{2})\zeta(1+s_2)^2duds_2 .
\end{align}
We follow up with the shift from $\Re(s_2)=1$ to $\Re(s_2)=-1+\varepsilon$ and pick up
a double pole at $s_2=0$. The error term from the line at $\Re(s_2)=-1+\varepsilon$
is $O_{\psi_1, \psi_2}(G^{1+\varepsilon})$, which is no more than $K^{2+\varepsilon}G^{-1}$. To compute the residue at the double pole we use the expansion
$\zeta(1+s_2)^2=\frac{1}{s_2^2}+\frac{2\gamma}{s_2}+\cdots$ for $s_2$ close to $0$, where $\gamma$ is the Euler-Mascheroni constant. The residue is then given by
\begin{align*}\frac{\sqrt{2\pi} G^{\frac{3}{2}}}{16} \cdot  \int_0^\infty &\frac{g(\sqrt{u})u^{\frac{1}{4}}}{\sqrt{2\pi u}}\tilde{\psi}_1(0) \lim_{s_2 \to 0} \frac{d}{ds_2} \Big(s_2^2 \Big(\frac{G\sqrt{u}}{4\pi^2}\Big)^{\frac{s_2}{2}} \tilde{\psi_2}(s_2)\Gamma(\frac{1+s_2}{2}) \cdot \Big(\frac{1}{s_2^2}+\frac{2\gamma}{s_2  }+\cdots \Big)\Big)du.
\end{align*}
The limit equals
\begin{align*}\lim_{s_2\to 0} \Big(\frac{G\sqrt{u}}{4\pi^2}\Big)^{\frac{s_2}{2}}\cdot \Big( &\frac{1}{2} \log \Big(\frac{G\sqrt{u}}{4\pi^2}\Big)\tilde{\psi_2}(s_2)\Gamma(\frac{1+s_2}{2}) + \\
&+ \tilde{\psi_2}'(s_2)\Gamma(\frac{1+s_2}{2}) +\frac{1}{2}\tilde{\psi_2}(s_2)\Gamma'(\frac{1+s_2}{2})+2\gamma \tilde{\psi_2}(s_2)\Gamma(\frac{1+s_2}{2})\Big).
\end{align*}
Evaluating the limit, using $\Gamma'(1/2)=\sqrt{\pi} (-\gamma-\log4)$,  the value of this limit is
$$\frac{\sqrt{\pi}}{2}\tilde{\psi_2}(0)  \log G  +  \frac{\sqrt{\pi}}{4}\tilde{\psi_2}(0) \log u +\sqrt{\pi}\tilde{\psi_2}(0) \Big(\frac{3}{2} \gamma - \log (4\pi)\Big) + \sqrt{\pi}\tilde{\psi_2}'(0).$$
Thus \eqref{maintermdiagonal} is equal to
\begin{align*}
G^{\frac{3}{2}}&\log G\cdot \frac{\sqrt{2}\pi}{32}\tilde{\psi_1}(0)\tilde{\psi_2}(0)  \cdot \int_0^\infty \frac{g(\sqrt{u})u^{\frac{1}{4}}}{\sqrt{2\pi u}} du
+G^{\frac{3}{2}} \frac{\sqrt{2}\pi}{64}\tilde{\psi_1}(0)\tilde{\psi_2}(0)\int_0^\infty \frac{g(\sqrt{u})u^{\frac{1}{4}}}{\sqrt{2\pi u}} \log(u) du\\
+&G^{\frac{3}{2}}\int_0^\infty \frac{g(\sqrt{u})u^\frac{1}{4}}{\sqrt{2\pi u}} du \cdot \Big(\frac{\sqrt{2}\pi}{16}\Big(\frac{3}{2} \gamma - \log (4\pi)\Big) \tilde{\psi_1}(0)\tilde{\psi_2}(0) +\frac{\sqrt{2}\pi}{16} \tilde{\psi_1}(0)\tilde{\psi_2}'(0) \Big),
\end{align*}
Combining with equation $g\left(x\right)=h\left(x-\frac{K}{G}\right)$, we see that
\eqref{maintermdiagonal} equals
\begin{align*}
K&^{\frac{1}{2}}G\log K\cdot \frac{\sqrt{2}\pi}{32}\tilde{\psi_1}(0)\tilde{\psi_2}(0) \int_0^\infty \frac{h(\sqrt{u})}{\sqrt{2\pi u}}\Big(1+\sqrt{u}\frac{G}{K}\Big)^{1/2} du \\
+&K^{\frac{1}{2}}G\cdot \frac{\sqrt{2}\pi}{32}\tilde{\psi_1}(0)\tilde{\psi_2}(0)\int_0^\infty \frac{h(\sqrt{u})}{\sqrt{2\pi u}}\Big(1+\sqrt{u}\frac{G}{K}\Big)^{1/2}\log\Big(1+\sqrt{u}\frac{G}{K}\Big) du\\
+&K^{\frac{1}{2}}G\cdot\int_0^\infty \frac{h(\sqrt{u})}{\sqrt{2\pi u}}\Big(1+\sqrt{u}\frac{G}{K}\Big) du \Big(\frac{\sqrt{2}\pi}{16}\Big(\frac{3}{2} \gamma - \log (4\pi)\Big) \tilde{\psi_1}(0)\tilde{\psi_2}(0) +\frac{\sqrt{2}\pi}{16} \tilde{\psi_1}(0)\tilde{\psi_2}'(0) \Big).
\end{align*}
 There is another term of size $K^{\frac{1}{2}}G$ coming from the residue
 of \eqref{diagonalstep4} at $s_1=-s_2$. This residue is given by
\begin{align}\label{diagonalstep6}K^{\frac{1}{2}}&G\cdot\frac{\sqrt{2}\pi}{16}
\int_0^\infty \frac{h(\sqrt{u})}{\sqrt{2\pi u}} \Big(1+\sqrt{u}\frac{G}{K}\Big)^{1/2}du
\cdot  \frac{1}{2\pi i} \int_{(1)} \tilde{\psi}_1(-s_2)\tilde{\psi_2}(s_2)
\zeta(1-s_2)\zeta(1+s_2)ds_2.
\end{align}
\end{proof}
\subsection{Auxiliary Lemmas}
In the following section we record some lemmas that we use to compute the off-diagonal term asymptotically. We start with some observations regarding the function $\hbar(v)$, appearing in the off-diagonal term. For any complex number $w$ define the function
$$\hbar^{\Re}_{w}(v)=\int_0^\infty \frac{h(\sqrt{u})}{\sqrt{2\pi u}} u^{w/2}\cos(uv)du.$$
For $w=0$ this is the real part of $\hbar(v)$. The Mellin transform of this function and its properties were evaluated by Khan \cite[Lemma 3.5]{khanNonvanishingSymmetricSquare2010} (and also Das-Khan \cite[sec. 2.6]{dasThirdMomentSymmetric2018}). Note that Khan and Das-Khan treat $\hbar_w(v)$ but the observations also go through  for the real part that we consider. As in \cite[sec. 2.6]{dasThirdMomentSymmetric2018} we have by repeated integration by parts the bound
\begin{equation}\label{hbarreal}
\frac{\partial^j}{\partial v^j} \hbar^{\Re}_{w}(v) \ll (1+|w|)^A|v|^{-A}
\end{equation} for any non-negative integer $j,A$ and the implied constant
depends on $\Re(w), j, A$.
We denote the Mellin transform of $\hbar^{\Re}_{w}$ by
$$\tilde{\hbar}^{\Re}_w(s)=\int_0^\infty \hbar^{\Re}_w(v)v^s \frac{dv}{v}.$$
The estimate \eqref{hbarreal} implies that the Mellin transform is absolutely
convergent and holomorphic for $\Re(s)>0$. Integrating by parts several times
and using again the bound \eqref{hbarreal} show that the Mellin transform deacys rapidly.
More precisely, we have
$$\tilde{\hbar}^{\Re}_w(s) \ll (1+|w|)^{A+\Re(s)+1}(1+|s|)^{-A},$$
with the implied constant depending on $\Re(w)$ and $A$.

By Mellin inversion we have, for $c>0$,
\begin{equation}\label{inverseMellin}\hbar^{\Re}_w(v)= \frac{1}{2\pi i}\int_{(c)}\tilde{\hbar}^{\Re}_w(s)v^{-s} ds.
\end{equation}
As in \cite[Lemma 3.5]{khanNonvanishingSymmetricSquare2010} we can explicitly evaluate
the Mellin transform of $\hbar^\Re_w$ within the range $0 < \Re(s) <1$. Then we get
$$
\tilde{\hbar}^{\Re}_w(s)=\int_0^\infty
\frac{h(\sqrt{u})}{\sqrt{2\pi u}}u^{w/2}\Gamma(s)\cos(\pi s/2) du.
$$

The next two lemmas will be useful to treat the exponential sum in the off-diagonal term.
\begin{lem}[Poisson summation]\label{Poisson}Let $f$ be a rapidly decaying smooth function. Then
$$
\sum_{n\equiv a\mod{c}} f(n)= \frac{1}{c} \sum_{n} \hat{f}\Big(\frac{n}{c}\Big)e_c(an),
$$
where $\hat{f}(\xi)=\int_{-\infty}^\infty f(x)e(-x\xi) dx$ denotes the Fourier transform of $f$.
\end{lem}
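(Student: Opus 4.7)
The plan is to deduce the arithmetic-progression Poisson summation formula from the classical Poisson summation on $\Z$, which asserts that $\sum_{m \in \Z} g(m) = \sum_{n \in \Z} \widehat{g}(n)$ for any rapidly decaying smooth $g$. The whole argument reduces to a single change of variables, so there is essentially no obstacle; I spell out the three steps below.

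First I would reindex the residue class $n \equiv a \pmod{c}$ by writing $n = a + mc$ with $m \in \Z$, so that
\[
\sum_{n \equiv a \,(\mathrm{mod}\, c)} f(n) \;=\; \sum_{m \in \Z} g(m), \qquad \text{where } g(x) := f(a + cx).
\]
Since $f$ is rapidly decaying and smooth, so is $g$, which justifies applying classical Poisson summation on the right-hand side: $\sum_{m \in \Z} g(m) = \sum_{n \in \Z} \widehat{g}(n)$.

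Next I would compute $\widehat{g}$ in terms of $\widehat{f}$. Using the defining integral and substituting $y = a + cx$, which gives $dy = c\, dx$ and $x = (y-a)/c$, one finds
\[
\widehat{g}(\xi) \;=\; \int_{-\infty}^{\infty} f(a+cx)\, e(-x\xi)\, dx \;=\; \frac{1}{c}\, e\!\left(\frac{a\xi}{c}\right) \int_{-\infty}^{\infty} f(y)\, e\!\left(-\frac{y\xi}{c}\right) dy \;=\; \frac{1}{c}\, e_c(a\xi)\, \widehat{f}\!\left(\frac{\xi}{c}\right).
\]
Substituting $\xi = n$ and summing over $n \in \Z$ yields exactly the claimed formula
\[
\sum_{n \equiv a \,(\mathrm{mod}\, c)} f(n) \;=\; \frac{1}{c} \sum_{n \in \Z} \widehat{f}\!\left(\frac{n}{c}\right) e_c(an).
\]

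The only point requiring (minor) care is verifying the hypotheses of classical Poisson summation for $g$: rapid decay of $f$ together with smoothness transfers to $g$ without loss, and this is more than enough for absolute convergence of both sides and for the Fourier series identity at $0$ in the dual group. Since the result is stated for the Schwartz-type class of rapidly decaying smooth functions, no convergence subtleties arise, and the three lines above constitute the entire proof.
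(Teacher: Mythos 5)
Your proof is correct and is exactly the argument the paper intends: the paper's one-line proof invokes classical Poisson summation applied to the shifted lattice $a + c\Z$, and your three steps (reindex $n = a + mc$, apply Poisson to $g(x) = f(a+cx)$, compute $\widehat{g}$ by a linear change of variables) are just that argument written out in full.
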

\begin{proof}
This follows immediately from the classical Poisson summation formula and noting that $n\equiv a\mod{c}$ is a shifted lattice of $\mathbb{Z}$.
\end{proof}
We detect cancellation in the off-diagonal term with the stationary phase method.
We quote the following results of Blomer, Khan and Young
(see Proposition 8.1 and a special case of Proposition 8.2
in \cite{blomerDistributionMassHolomorphic2013b}).
\begin{lem} \label{integrationbyparts}
 Let $Y \geq 1$, $X, Q, U, R > 0$,
and suppose that $h$ is a smooth function with support on $[\alpha, \beta]$, satisfying
\begin{equation*}
h^{(j)}(t) \ll_j X U^{-j}.
\end{equation*}
Suppose $f$ 
  is a smooth function on $[\alpha, \beta]$ such that
\begin{equation}
 |f'(t)| \geq R
\end{equation}
for some $R > 0$, and
\begin{equation}\label{diffh0}
f^{(j)}(t) \ll_j Y Q^{-j}, \qquad \text{for } j=2, 3, \dots.
\end{equation}
Then the integral $I$ defined by
\begin{equation*}
I = \int_{-\infty}^{\infty} h(t) e^{i f(t)} dt
\end{equation*}
satisfies
\begin{equation}
\label{eq:Ipartsbound}
 I \ll_A (\beta - \alpha) X \left[\left(\frac{QR}{\sqrt{Y}} \right)^{-A} + (RU)^{-A}\right].
\end{equation}
\end{lem}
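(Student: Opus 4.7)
The plan is a standard non-stationary-phase argument: since $|f'|\geq R>0$ on the entire support of $h$, there is no stationary point, and the bound will be extracted by repeated integration by parts alone. Introduce the operator $Lg:=\frac{d}{dt}\bigl(g/(if')\bigr)$. Writing $e^{if(t)}=\frac{1}{if'(t)}\frac{d}{dt}e^{if(t)}$ and integrating by parts once, the compact support of $h$ kills the boundary terms and produces $I=-\int_\alpha^\beta Lh(t)\,e^{if(t)}\,dt$. Iterating $A$ times yields
$$I=(-1)^A\int_\alpha^\beta L^A h(t)\,e^{if(t)}\,dt,$$
so the task reduces to the pointwise estimation of $L^A h$ on $[\alpha,\beta]$.

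The key combinatorial step is an induction on $A$, carried out with the Leibniz rule together with the identity $((f')^{-n})'=-nf''(f')^{-n-1}$, which expresses $L^A h$ as a linear combination of $O_A(1)$ monomials of the shape
$$T_{\vec a}(t)=\frac{h^{(a_0)}(t)\prod_{i=1}^{r}f^{(a_i+1)}(t)}{(f'(t))^{A+r}},\qquad a_0\geq 0,\ a_i\geq 1,\ a_0+\sum_{i=1}^{r}a_i=A.$$
Plugging in the hypotheses $|h^{(j)}|\ll X U^{-j}$, $|f^{(j)}|\ll Y Q^{-j}$ for $j\geq 2$, and $|f'|\geq R$, one obtains the pointwise bound
$$|T_{\vec a}(t)|\ll X\,(UR)^{-a_0}(QR)^{-(A-a_0)}\Bigl(\frac{Y}{QR}\Bigr)^{r}.$$
The constraint $a_i\geq 1$ forces $r\leq A-a_0$, and an elementary split on whether $Y\leq QR$ or $Y>QR$ (using $Y\geq 1$) shows that every such monomial is dominated by $X\bigl[(UR)^{-A}+(QR/\sqrt{Y})^{-A}\bigr]$. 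Summing the finitely many monomials and integrating over $[\alpha,\beta]$ yields \eqref{eq:Ipartsbound}.

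The main obstacle, to the extent there is one, is purely bookkeeping: verifying the inductive description of $L^A h$ and performing the elementary case analysis that collapses the two-parameter family of monomial bounds into the claimed two-term expression. There is no delicate cancellation to exploit, no contour manipulation, and no stationary-phase analysis; only the repeated IBP identity above applied to a compactly supported amplitude.
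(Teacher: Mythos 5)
The paper does not actually prove this lemma; it is quoted verbatim from Blomer--Khan--Young (their Proposition~8.1), so there is no ``paper's own proof'' to compare against. Your repeated-integration-by-parts argument with the operator $L g = \tfrac{d}{dt}\bigl(g/(if')\bigr)$ is the standard route and is the one BKY take; the inductive description of $L^A h$ as a bounded-in-$A$ number of monomials $h^{(a_0)}\prod_i f^{(a_i+1)}/(f')^{A+r}$ with $a_0+\sum a_i = A$, $a_i\ge 1$, is correct, as is the pointwise bound $|T_{\vec a}|\ll X\,(UR)^{-a_0}(QR)^{-(A-a_0)}(Y/(QR))^{r}$.

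However, the final ``elementary split on $Y\le QR$ vs.\ $Y>QR$'' does not quite close the argument, and the gap is genuine rather than cosmetic. In the regime $Y>QR$, taking $r=A-a_0$ the monomial bound is $X\,(UR)^{-a_0}\bigl(Y/(QR)^2\bigr)^{A-a_0}$; with $a_0=0$ (the term $h\,(f'')^A/(f')^{2A}$, which does occur) this is $X\bigl(QR/\sqrt Y\bigr)^{-2A}$. If $QR<\sqrt Y$, this exceeds $X\bigl(QR/\sqrt Y\bigr)^{-A}$ by an arbitrarily large factor, so the claimed domination $|T_{\vec a}|\ll X\bigl[(UR)^{-A}+(QR/\sqrt Y)^{-A}\bigr]$ simply fails; weighted AM--GM needs the exponents of $(UR)^{-1}$ and $(QR/\sqrt Y)^{-1}$ to sum to $A$, whereas here they sum to $A+r\ge A$, which only helps when both bases are $\le 1$. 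The fix is a preliminary reduction you have omitted: if $RU<1$ or $QR/\sqrt Y<1$ then the right-hand side of \eqref{eq:Ipartsbound} already exceeds $(\beta-\alpha)X$, so the trivial bound $|I|\le(\beta-\alpha)\sup|h|\ll(\beta-\alpha)X$ gives the lemma immediately; one may therefore assume $(UR)^{-1}\le 1$ and $(QR/\sqrt Y)^{-1}\le 1$, after which your rewriting $v^{A-a_0+r}Y^{r}\le (v\sqrt Y)^{A-a_0+r}$ (valid since $Y\ge1$ and $r\le A-a_0$) followed by $u^{a_0}w^{A-a_0+r}\le u^{a_0}w^{A-a_0}\le u^A+w^A$ closes the estimate with no further case split on $Y$ versus $QR$. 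Adding that one reduction makes the proof complete.
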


\begin{lem}[Stationary phase] \label{stationaryphase}
Let $X,Y,V,V_1,Q>0$ and $Z:=Q+X+Y+V_1+1$, and assume that
$$Y \geq Z^{3/20},\quad V_1 \geq V\geq \frac{QZ^{1/40}}{Y^{1/2}}.$$
Suppose that $h$ is a smooth function on $\mathbb{R}$ with support on an interval $J$ of length $V_1$, satisfying
$$h^{(j)}(t)\ll_j XV^{-j}$$ for all $j \in \mathbb{N}_0$. Suppose $f$ is a smooth function on $J$ such that there exists a unique point $t_0\in J$ such that $f'(t_0)=0$, and furthermore
$$f''(t) \gg YQ^{-2}, \quad f^{(j)}(t)\ll_j YQ^{-j}, \quad \text{ for } j\geq 3 \text{ and } t \in J.$$
Then
$$\int_{-\infty}^\infty h(t)e^{2\pi if(t)}dt = e^{\sgn(f''(t_0))\cdot \frac{\pi i}{4}}\frac{e^{2\pi if(t_0)}}{\sqrt{|f''(t_0)|}} h(t_0)+ O\left(\frac{Q^{3/2}X}{Y^{3/2}}\cdot\left(V^{-2}+\frac{Y^{2/3}}{Q^2} \right)\right).$$
In particular, we also have the trivial bound
$$\int_{-\infty}^\infty h(t) e^{2\pi i f(t)} dt \ll \frac{XQ}{\sqrt{Y}}+1.$$
\end{lem}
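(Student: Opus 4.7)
The plan is the classical stationary-phase argument: localize to a neighborhood of the unique critical point $t_0$ using a smooth cutoff, handle the tail via integration by parts (Lemma \ref{integrationbyparts}), and on the localized piece change variables to reduce to a Fresnel integral after Taylor expanding the phase. Concretely, I would introduce a smooth partition $\eta + (1-\eta) = 1$ with $\eta$ supported on $|t-t_0|\le 2W$ and $\eta\equiv 1$ on $|t-t_0|\le W$ satisfying $\eta^{(j)} \ll W^{-j}$, where the localization width $W$ is chosen slightly larger than the natural stationary-phase scale $1/\sqrt{|f''(t_0)|} \asymp Q/\sqrt{Y}$; taking $W = Z^{\eps} Q/\sqrt{Y}$ works. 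Write $I = I_{\mathrm{near}} + I_{\mathrm{far}}$ for the integrands $\eta h e^{2\pi i f}$ and $(1-\eta) h e^{2\pi i f}$ respectively.

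For the tail piece $I_{\mathrm{far}}$ where $|t-t_0| \ge W$, the mean value theorem applied to $f'$ (using $f'(t_0)=0$ and $f''\gg Y/Q^2$) gives $|f'(t)| \gg WY/Q^2$. Applying Lemma \ref{integrationbyparts} with $R \asymp WY/Q^2$, $U \asymp \min(V,W)$, and length $V_1$, the hypothesis $V \ge QZ^{1/40}/\sqrt{Y}$ is calibrated precisely so that both $QR/\sqrt{Y}$ and $RU$ exceed $Z^{\eps}$, rendering $I_{\mathrm{far}} = O_A(Z^{-A})$ for any $A > 0$.

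For the near piece $I_{\mathrm{near}}$, I would Taylor expand the phase $f(t) = f(t_0) + \tfrac12 f''(t_0)(t-t_0)^2 + g(t)$, with cubic remainder $|g(t)| \ll Y|t-t_0|^3/Q^3$, and change variables $u = \sqrt{|f''(t_0)|}(t-t_0)$. Expanding $e^{2\pi i g} = 1 + O(|g|)$ and $\eta h$ in a Taylor expansion near $t_0$, the leading contribution is the Fresnel integral $\int_{\mathbb{R}} e^{\pi i \sgn(f''(t_0)) u^2}\, du = e^{i\pi \sgn(f''(t_0))/4}$, producing the stated main term $e^{i\pi\sgn(f''(t_0))/4}\, h(t_0)\, e^{2\pi i f(t_0)}/\sqrt{|f''(t_0)|}$. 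The subleading contributions arise from (i) the cubic-and-higher phase remainder $g$ and (ii) the Taylor expansion of $\eta h$ beyond constant order, each estimated using $|h^{(j)}|\ll X V^{-j}$ and $|f^{(j)}|\ll Y Q^{-j}$; after optimizing $W = Z^{\eps}Q/\sqrt{Y}$ these combine to give the claimed error $O(XQ^{3/2}/Y^{3/2}\cdot(V^{-2}+Y^{2/3}/Q^2))$. The trivial bound then follows since the main term is of size $|h(t_0)|/\sqrt{|f''(t_0)|} \ll XQ/\sqrt{Y}$, with the $+1$ absorbing the degenerate case where the error dominates.

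The main obstacle is the delicate bookkeeping of competing error contributions against the cutoff width $W$: the cubic-phase remainder, the quadratic-amplitude Taylor remainder, and the integration-by-parts tail each depend on $W$ in a different way, and the hypotheses $Y \ge Z^{3/20}$ and $V \ge QZ^{1/40}/\sqrt{Y}$ are engineered exactly so that the optimized bound takes the stated form. Since this verification is carried out in detail in Propositions 8.1--8.2 of Blomer--Khan--Young \cite{blomerDistributionMassHolomorphic2013b}, the lemma is ultimately quoted directly from their work.
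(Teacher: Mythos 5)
Your proposal is correct and in the end takes the same route as the paper: the paper's proof simply invokes Propositions 8.1--8.2 of Blomer--Khan--Young with $\delta=1/20$ (whence the exponents $3/20$ and $1/40$) and bounds the non-leading terms of their asymptotic expansion trivially to obtain the stated error $O\bigl(\frac{Q^{3/2}X}{Y^{3/2}}(V^{-2}+\frac{Y^{2/3}}{Q^2})\bigr)$, which is exactly what you do after your (standard, and accurate) sketch of the localization/Fresnel mechanism underlying that result.
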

\begin{proof}
See Proposition 8.2 in \cite{blomerDistributionMassHolomorphic2013b}, with $\delta=1/20$
and $A$ sufficiently large. We bounded the contribution of the non-leading terms in the asymptotic expansion
of \cite[Eq. 8.9]{blomerDistributionMassHolomorphic2013b}) trivially by
$$O\Big(\frac{Q^{3/2}X}{Y^{3/2}}\cdot\Big(V^{-2}+\frac{Y^{2/3}}{Q^2} \Big)\Big).$$
\end{proof}

The following formula can be found in \cite[Lemma 5.7]{Zenz2021QuantumVF}
which concerns about Kloosterman sum.
\begin{lem}\label{Kloostermansum}Let $S(a,b; c)$ denote the classical Kloosterman sum. Then
$$\sum_{\substack{a_1 \mod{c},\\ a_2 \mod{c}}} \sum_{\substack{b_1 \mod{c} ,\\ b_2 \mod{c}}} S(a_1(a_1+b_1),a_2(a_2+b_2);c)e_c(2a_1a_2+a_1b_2+a_2b_1)=c^3\phi(c),$$
where $\phi(c)$ is Euler's totient function.
\end{lem}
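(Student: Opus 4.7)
My plan is to prove the identity by opening up the Kloosterman sum via its definition, performing the inner sums over $b_1, b_2$ to produce congruence constraints, and then observing that the remaining exponent collapses identically to zero.

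First I would write
\begin{equation*}
S(a_1(a_1+b_1), a_2(a_2+b_2); c) = \sum_{\substack{x \bmod c \\ (x,c)=1}} e_c\bigl(a_1(a_1+b_1) x + a_2(a_2+b_2) \bar{x}\bigr),
\end{equation*}
where $\bar{x}$ is the multiplicative inverse of $x$ modulo $c$, and expand the full exponent on the left-hand side as
\begin{equation*}
a_1^2 x + a_2^2 \bar{x} + 2 a_1 a_2 + b_1\bigl(a_1 x + a_2\bigr) + b_2\bigl(a_2 \bar{x} + a_1\bigr).
\end{equation*}
Thus the variables $b_1, b_2$ appear only linearly, and swapping the order of summation so that $b_1, b_2$ are summed first yields the orthogonality identities
\begin{equation*}
\sum_{b_1 \bmod c} e_c\bigl(b_1(a_1 x + a_2)\bigr) = c \cdot \mathbf{1}_{a_1 x + a_2 \equiv 0 \, (c)},
\qquad
\sum_{b_2 \bmod c} e_c\bigl(b_2(a_2 \bar{x} + a_1)\bigr) = c \cdot \mathbf{1}_{a_2 \bar{x} + a_1 \equiv 0 \, (c)}.
\end{equation*}

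Next I would note that since $(x,c)=1$, the two congruence conditions $a_1 x + a_2 \equiv 0 \pmod c$ and $a_2 \bar{x} + a_1 \equiv 0 \pmod c$ are equivalent (multiply one by $\bar{x}$). So after the $b_1, b_2$ sums the expression becomes
\begin{equation*}
c^2 \sum_{\substack{x \bmod c \\ (x,c)=1}} \sum_{\substack{a_1, a_2 \bmod c \\ a_2 \equiv -a_1 x \, (c)}} e_c\bigl(a_1^2 x + a_2^2 \bar{x} + 2 a_1 a_2\bigr).
\end{equation*}
Substituting $a_2 = -a_1 x$ into the exponent and using $x\bar{x} \equiv 1 \pmod c$ gives
\begin{equation*}
a_1^2 x + (a_1 x)^2 \bar{x} - 2 a_1^2 x = a_1^2 x + a_1^2 x - 2 a_1^2 x = 0,
\end{equation*}
so the argument of $e_c$ vanishes identically. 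Hence the entire sum equals
\begin{equation*}
c^2 \sum_{\substack{x \bmod c \\ (x,c)=1}} \sum_{a_1 \bmod c} 1 = c^2 \cdot \phi(c) \cdot c = c^3 \phi(c),
\end{equation*}
as claimed.

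No real obstacle arises here: the only care needed is confirming that the two $b$-orthogonality conditions coincide (which uses coprimality of $x$ and $c$) and that the resulting quadratic exponent in $a_1$ telescopes to $0$ after the substitution. Once those two algebraic checks are in place, the remaining count is immediate.
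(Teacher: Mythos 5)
Your proof is correct. The paper does not actually prove this lemma; it only cites it from \cite[Lemma 5.7]{Zenz2021QuantumVF}, so there is no in-paper argument to compare against. Your computation is the natural one: after opening the Kloosterman sum over $x$ with $(x,c)=1$, the variables $b_1,b_2$ enter the combined exponent linearly with coefficients $a_1x+a_2$ and $a_2\bar{x}+a_1$ respectively, and orthogonality over $b_1,b_2$ produces the factor $c^2$ together with the single congruence $a_2\equiv -a_1x\pmod{c}$ (the two conditions coincide since $(x,c)=1$). Substituting $a_2=-a_1x$ into the residual exponent $a_1^2x+a_2^2\bar{x}+2a_1a_2$ and using $x\bar{x}\equiv 1\pmod{c}$ gives $a_1^2x+a_1^2x-2a_1^2x=0$, so every surviving term contributes $1$, and the count over $a_1$ (free, $c$ values) and $x$ ($\phi(c)$ values) gives $c^2\cdot c\cdot\phi(c)=c^3\phi(c)$ as claimed. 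There are no gaps.
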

We also quote the following result (see \cite{di1857note}).
\begin{lem}[Fa\`a di Bruno's Formula]
Suppose $ p(t)$ and $q(t)$ are functions for which all necessary derivatives are defined, then
\begin{align*}
\frac{d^n}{d t^n}p(q(t))=\sum\frac{n!}{k_1!k_2!k_3!\cdots k_n!}p^{(m)}(q(t))\left(\frac{1}{1!}\frac{d}{dt}q(t)\right)^{k_1}\left(\frac{1}{2!}\frac{d^2}{dt^2}q(t)\right)^{k_2}\cdots\left(\frac{1}{n!}\frac{d^n}{dt^n}q(t)\right)^{k_n},
\end{align*}
 where the sum is over all different solutions in nonnegative integers $k_1,k_2,k_3,\dots,k_n$ of $k_1+2k_2+3k_3+\cdots+nk_n=n$, and  $k_1+k_2+k_3+\cdots+k_n=m$.
\end{lem}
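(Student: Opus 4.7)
The plan is to prove Fa\`a di Bruno's formula via a combinatorial argument using set partitions, which is cleaner than the direct inductive approach that gets bogged down in multinomial coefficient bookkeeping. The central step is to first prove the cleaner intermediate identity
\[
\frac{d^n}{dt^n} p(q(t)) = \sum_{\Pi} p^{(|\Pi|)}(q(t)) \prod_{B \in \Pi} q^{(|B|)}(t),
\]
where the sum ranges over all set partitions $\Pi$ of $\{1, 2, \ldots, n\}$, $|\Pi|$ denotes the number of blocks, and $|B|$ is the size of a block $B \in \Pi$.

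I would establish this identity by induction on $n$. The base case $n = 1$ is the chain rule. For the inductive step, differentiating the right-hand side once more with respect to $t$ and interpreting the result combinatorially, each term corresponding to a partition $\Pi$ of $\{1, \ldots, n\}$ produces two kinds of contributions: first, a new term obtained by adjoining the singleton block $\{n+1\}$ to $\Pi$, arising from differentiating the outer factor $p^{(|\Pi|)}(q(t))$ via the chain rule; and second, for each block $B \in \Pi$, a term where $n+1$ is inserted into $B$, arising from differentiating one factor $q^{(|B|)}(t)$ and raising its order to $q^{(|B|+1)}(t)$. Together these two operations enumerate each partition of $\{1, \ldots, n+1\}$ exactly once, closing the induction.

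Having established the set-partition identity, I would finish by grouping partitions according to their shape. For each nonnegative integer solution $(k_1, \ldots, k_n)$ of $k_1 + 2 k_2 + \cdots + n k_n = n$ with $k_1 + k_2 + \cdots + k_n = m$, the number of set partitions of $\{1, \ldots, n\}$ having exactly $k_j$ blocks of size $j$ equals $\frac{n!}{\prod_j (j!)^{k_j} k_j!}$, which is a standard count obtainable from the orbit-stabilizer theorem applied to the natural action of the symmetric group $S_n$ on such partitions. Every set partition of shape $(k_1, \ldots, k_n)$ contributes the same summand $p^{(m)}(q(t)) \prod_j q^{(j)}(t)^{k_j}$ to the intermediate identity, and absorbing the factor $(j!)^{-k_j}$ into each $q^{(j)}(t)^{k_j}$ yields the stated form $\left(\frac{1}{j!} q^{(j)}(t)\right)^{k_j}$ with coefficient $\frac{n!}{k_1! k_2! \cdots k_n!}$. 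The main (and really quite minor) obstacle is the final bookkeeping of factorials relating the set-partition count to the stated multinomial coefficient; no analytic input beyond the classical chain rule is required.
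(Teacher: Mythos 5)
The paper does not prove this lemma at all; it is simply quoted from the literature (with a citation to Fa\`a di Bruno's original 1857 note), so there is no ``paper's own proof'' to compare against. That said, your proposed proof is correct and is a standard modern route to this classical identity. The set-partition intermediate identity
\[
\frac{d^n}{dt^n} p(q(t)) \;=\; \sum_{\Pi} p^{(|\Pi|)}(q(t)) \prod_{B \in \Pi} q^{(|B|)}(t)
\]
is indeed clean: the induction step correctly accounts for every partition of $\{1,\dots,n+1\}$ exactly once, since removing $n+1$ from its block gives a bijection between partitions of $\{1,\dots,n+1\}$ and pairs consisting of a partition $\Pi$ of $\{1,\dots,n\}$ together with a choice of either ``new singleton'' or ``one of the blocks of $\Pi$.'' The subsequent grouping by shape, using the count $n!/\prod_j (j!)^{k_j} k_j!$ of partitions with $k_j$ blocks of size $j$, and the absorption of the $(j!)^{-k_j}$ factors into the stated $\bigl(\tfrac{1}{j!} q^{(j)}(t)\bigr)^{k_j}$, reproduces the multinomial coefficient $n!/(k_1!\cdots k_n!)$ exactly as required. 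In short: the argument is complete, the bookkeeping you flag as the main remaining work is routine, and no issues remain.
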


\subsection{Preparing for Evaluating the 	Off-Diagonal}
Our goal is to obtain an asymptotic formula for the off-diagonal $\mathcal{OD}$ given by
 \begin{align}\label{offdiagonal2}
-\sqrt{\pi} \Im \bigg(&e^{-\frac{\pi i}{4}} G \mathop{\sum\sum \sum}_{m_j \in\mathbb{N}/\{0\},d_j,n_j\in\mathbb{N}^{+}\atop j\in\{1,2\}} \sum_{c=1}^{\infty} \frac{S(n_1(n_1+m_1),n_2(n_2+m_2);c)}{\sqrt{c}}e_c(2n_1n_2+n_1m_2+n_2m_1)\\
& \times \frac{e_c\big(f(n_1,n_2,m_1,m_2)\big)}{d_1d_2 \big(n_1(n_1+m_1)n_2(n_2+m_2)\big)^{\frac{3}{4}}} \cdot \varpi_{c,G} \begin{pmatrix}
d_1 &m_1 &n_1\\
d_2 &m_2& n_2
\end{pmatrix}\bigg), \nonumber
 \end{align}
 where
 $$
 f(n_1, n_2, m_1, m_2)=2\sqrt{n_1(n_1+m_1)n_2(n_2+m_2)}-2n_1n_2-n_1m_2-n_2m_1,
 $$
and
 \begin{align*}
 \varpi_{c,G} \begin{pmatrix}
d_1 &m_1 &n_1\\
d_2 &m_2& n_2
\end{pmatrix}&\coloneqq \bar{g}_{d_1, d_2}^*\Big(\frac{c G^2}{8\pi \sqrt{n_1(n_1+m_1)n_2(n_2+m_2)}}\Big)
 \\
 &=\frac{G}{16}\int_0^\infty \frac{g(\sqrt{u})\sqrt{u}}{\sqrt{2\pi u}} \psi_1\Big(\frac{\sqrt{u}G}{2\pi d_1(2n_1+m_1)}\Big)\psi_2\Big(\frac{\sqrt{u}G}{2\pi d_2(2n_2+m_2)}\Big)  \nonumber\\
 &\times  \exp\Big(- \frac{\sqrt{u}G m_1^2}{(2n_1+m_1)^2}- \frac{\sqrt{u}G m_2^2}{(2n_2+m_2)^2}\Big) e^{iu \frac{cG^2}{8\pi (n_1(n_1+m_1)n_2(n_2+m_2))^{1/2}}} du\nonumber\\
  &=\frac{K}{16}\int_0^\infty \frac{h(\sqrt{u})}{\sqrt{2\pi u}} \Big(1+\frac{G\sqrt{u}}{K}\Big)\psi_1\Big(\frac{\sqrt{u}G+K}{2\pi d_1(2n_1+m_1)}\Big)\psi_2\Big(\frac{\sqrt{u}G+K}{2\pi d_2(2n_2+m_2)}\Big)  \nonumber\\
 &\times  \exp\Big(- \frac{(\sqrt{u}G+K) m_1^2}{(2n_1+m_1)^2}- \frac{(\sqrt{u}G+K) m_2^2}{(2n_2+m_2)^2}\Big) e^{ic \frac{uG^2+2\sqrt{u}KG+K^2}{8\pi (n_1(n_1+m_1)n_2(n_2+m_2))^{1/2}}} du.\nonumber
 \end{align*}
Notice that $ \varpi_{c,G}$ is smooth compact function of  paramaters $d_j,m_j,n_j$ for $j=1,2$.

Integrating by parts several times shows that $\varpi\ll K\left(\frac{cKG}{n_1n_2}\right)^{-A}$ for any $A>0$, which implies that $\frac{cKG}{n_1n_2}\ll K^{\varepsilon}$ as otherwise $\varpi$ is negligible.
By the condition $n_id_i\asymp K$ and $G=K^{\theta}$,
we have the upper bound
$d_i\ll K^{\delta}$, where $\delta>\frac{1}{2}(1-\theta+\varepsilon)$.

For technical reasons, we expect a lower bound on $m_j,j=1,2$.
The terms with $m_i<K^{\eta}$ in $\mathcal{OD}$ will be estimated
trivially by (noting that $\frac{m_i}{n_i}\ll K^{-\frac{1}{2}}$)
\begin{align}\label{oderrorterm1}
 KG \mathop{\sum\sum \sum}_{d_j\ll K^{\delta},m_j\ll K^{\eta}\atop n_jd_j\asymp K,j\in\{1,2\}} \sum_{c=1}^{\infty} \frac{\tau(c)\left(n_1(n_1+m_1),n_2(n_2+m_2),c\right)^{\frac{1}{2}} }{d_1d_2 \big(n_1(n_1+m_1)n_2(n_2+m_2)\big)^{\frac{3}{4}}}  \left(\frac{cKG}{n_1n_2}\right)^{-A}\ll K^{\frac{3}{2}-\frac{1}{2}\theta+2\eta+\varepsilon}
\end{align}		
by choosing $A=1$ and $c\ll K^{2}$.
To make this term less than main term, we let
$K^{\frac{3}{2}-\frac{1}{2}\theta+2\eta+\varepsilon}\ll GK^{\frac{1}{2}}$
which implies $\theta>\frac{2}{3}+\frac{4}{3}\eta+\varepsilon.$

Next we work under the following conditions:
\begin{equation}\label{cond1}n_id_i \asymp K \quad \text{and}\quad d_i \leq K^{\delta}\quad \text{for } i=1,2,\end{equation}
\begin{equation}\label{cond2}K^{\eta} \leq m_i\leq K^{1/2+\varepsilon}d_i^{-1} \quad \text{for } i=1,2,\end{equation}
\begin{equation}\label{cond3}\frac{cKG}{n_1n_2} \ll K^\varepsilon \quad \text{or equivalently}\quad cd_1d_2\ll K^{1- \theta+\varepsilon}\end{equation}
for $G=K^{\theta}$, where $\theta<1$ and $\delta,\eta$ satisfy
\begin{equation}
\label{cond1111}
\delta>\frac{1}{2}(1-\theta+\varepsilon),
\quad\quad
\theta>\frac{2}{3}+\frac{4}{3}\eta+\varepsilon.\end{equation}

To compute the off-diagonal $\mathcal{OD}$, we will use
the Poisson summation formula and the stationary phase method.
We expect a main term from the zero frequency and will show that the other terms are lower order terms.

\begin{lem}\label{afterPoisson} Let $\mathcal{OD}$ be defined as in \eqref{offdiagonal2}. Then
\begin{align*}\mathcal{OD}=
-\sqrt{\pi}G  \Im \bigg(&e^{-2\pi i/8} \sum_{c} \sum_{\substack{b_1 \mod{c}, \\ b_2 \mod{c}}}\sum_{\substack{a_1 \mod{c}, \\ a_2 \mod{c}}} \frac{S(a_1(a_1+b_1),a_2(a_2+b_2);c)}{c^{3/2}}e_c(2a_1a_2+a_1b_2+a_2b_1) \cdot \\
& \times \sum_{d_1,d_2}  \sum_{\substack{m_1 \equiv b_1 \mod{c}\\m_2 \equiv b_2 \mod{c}}}\sum_{n_2 \equiv a_2 \mod{c}} \sum_{v\in \mathbb{Z}} \mathcal{I}_v(n_2, m_1, m_2, d_1, d_2, c) \bigg)
\end{align*}
with
\begin{align}\label{integral}
\mathcal{I}_v(n_2, m_1, m_2, d_1, d_2, c):=\int_{-\infty}^\infty &\frac{e_c\big(f(x, n_2,m_1,m_2)-v(x-a_1)\big)}{d_1d_2 \big(x(x+m_1)n_2(n_2+m_2)\big)^{3/4}}\varpi_{c,G} \begin{pmatrix}
d_1 &m_1 &x\\
d_2 &m_2& n_2
\end{pmatrix}dx .
\end{align}
\end{lem}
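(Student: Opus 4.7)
My approach is to apply the Poisson summation formula (Lemma \ref{Poisson}) to the $n_1$-sum appearing in \eqref{offdiagonal2}, after first stratifying each of the variables $n_1, n_2, m_1, m_2$ by their residue class modulo $c$. Writing $n_j \equiv a_j \pmod c$ and $m_j \equiv b_j \pmod c$ with $a_j, b_j \in \mathbb{Z}/c\mathbb{Z}$ for $j=1,2$, the Kloosterman sum $S(n_1(n_1+m_1), n_2(n_2+m_2); c)$ depends only on these residues, yielding the factor $S(a_1(a_1+b_1), a_2(a_2+b_2); c)$ in the outer sum of the claim.

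To prepare for Poisson summation, I decompose the phase in $e_c\bigl(2\sqrt{n_1(n_1+m_1)n_2(n_2+m_2)}\bigr)$ according to the defining identity
$$2\sqrt{n_1(n_1+m_1)n_2(n_2+m_2)} \;=\; (2n_1 n_2 + n_1 m_2 + n_2 m_1) + f(n_1,n_2,m_1,m_2).$$
The bilinear polynomial part, reduced modulo $c$, contributes the residue-class phase $e_c(2a_1 a_2 + a_1 b_2 + a_2 b_1)$ appearing outside the integral, while $e_c(f)$ is a smooth function of $n_1$ on the support of $\varpi_{c,G}$ and is retained inside the resulting Fourier integral.

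With this decomposition in hand, for each fixed choice of $(a_1, a_2, b_1, b_2, d_1, d_2)$ and inner variables $(m_1, m_2, n_2)$ in the stipulated residue classes, I apply Lemma \ref{Poisson} to the sum $\sum_{n_1 \equiv a_1 \pmod c} F(n_1)$ where
$$F(n_1) \;=\; \frac{e_c(f(n_1,n_2,m_1,m_2))}{d_1 d_2 \bigl(n_1(n_1+m_1)n_2(n_2+m_2)\bigr)^{3/4}}\,\varpi_{c,G} \begin{pmatrix} d_1 & m_1 & n_1 \\ d_2 & m_2 & n_2\end{pmatrix}.$$
Since $\psi_1$ is compactly supported on $\mathbb{R}^+$, the weight $\varpi_{c,G}$ is smooth and compactly supported in its $n_1$-slot, so $F$ is Schwartz and Poisson summation yields $c^{-1}\sum_v \widehat{F}(v/c)\, e_c(a_1 v)$. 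Writing $\widehat{F}(v/c) = \int F(x) e_c(-xv)\, dx$ and combining $e_c(-xv)\, e_c(a_1 v) = e_c(-v(x-a_1))$ reproduces precisely the integrals $\mathcal{I}_v$ defined in \eqref{integral}; the Poisson factor $c^{-1}$ combines with the original $c^{-1/2}$ in \eqref{offdiagonal2} to give the $c^{-3/2}$ denominator stated in the lemma.

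There is no serious analytic obstacle at this stage: the argument is essentially a careful bookkeeping exercise that converts a sum over $n_1$ into its Fourier-dual sum over $v$. The only points requiring attention are (i) confirming that the correct polynomial to subtract from $2\sqrt{n_1(n_1+m_1)n_2(n_2+m_2)}$ is the bilinear form $2n_1 n_2 + n_1 m_2 + n_2 m_1$, so that the reduction modulo $c$ is clean while the remainder $f$ remains smooth on the support of $\varpi_{c,G}$, and (ii) verifying the Schwartz hypothesis needed to invoke Lemma \ref{Poisson}, which follows from the compact support of $\psi_1, \psi_2$. The genuinely hard analytic content — estimating the integrals $\mathcal{I}_v$ and summing over $v$ — is postponed to subsequent lemmas employing the stationary phase method (Lemma \ref{stationaryphase}), the integration-by-parts bounds of Lemma \ref{integrationbyparts}, and the Kloosterman sum evaluation of Lemma \ref{Kloostermansum}.
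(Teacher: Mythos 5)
Your proposal is correct and takes essentially the same approach as the paper: split the summation variables into residue classes modulo $c$ so that the Kloosterman sum and the bilinear phase $e_c(2n_1n_2+n_1m_2+n_2m_1)$ depend only on those classes, then apply the Poisson summation formula (Lemma~\ref{Poisson}) in the $n_1$-variable. The bookkeeping you lay out — recombining $e_c(-xv)\,e_c(a_1 v)=e_c(-v(x-a_1))$ and absorbing the Poisson factor $c^{-1}$ into $c^{-1/2}$ to obtain $c^{-3/2}$ — matches the paper's argument, which simply states the residue-class split and cites Poisson summation.
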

\begin{proof}
We work with expression \eqref{offdiagonal2} and split the variables $n_1, n_2, m_1, m_2$ into residue classes modulo $c$. The off-diagonal $OD$ is then given by
  \begin{align}\label{Offdiagonalexpression}
 -\sqrt{\pi}G  \Im \bigg(&e^{-2\pi i/8} \sum_{c} \sum_{\substack{b_1 \mod{c}, \\ b_2 \mod{c}}}\sum_{\substack{a_1 \mod{c}, \\ a_2 \mod{c}}} \frac{S(a_1(a_1+b_1),a_2(a_2+b_2);c)}{\sqrt{c}}e_c(2a_1a_2+a_1b_2+a_2b_1) \\
& \times \sum_{d_1,d_2} \mathop{\sum\sum}_{\substack{j\in\{1,2\}\\m_j \equiv b_j \mod{c}\\ n_j \equiv a_j \mod{c}}}\frac{e_c\big(f(n_1,n_2,m_1,m_2)\big)}{d_1d_2 \big(n_1(n_1+m_1)n_2(n_2+m_2)\big)^{3/4}} \cdot \varpi_{c,G} \begin{pmatrix}
d_1 &m_1 &n_1\\
d_2 &m_2& n_2
\end{pmatrix}\bigg). \label{exposum}
 \end{align}
Here we have used the fact that the Kloosterman sum only depends on
the residue classes modulo $c$. The lemma follows now upon
applying the Poisson summation formula (see Lemma \ref{Poisson}) to the summation over $n_1$.
\end{proof}

Next we will analyze the integral
\begin{equation}\label{maintermanderrorterm}
\mathcal{I}_v(n_2, m_1, m_2, d_1, d_2, c)
\end{equation} with the stationary phase method.
We now need to check the conditions of the Lemma \ref{stationaryphase}
so that we can apply it to the quantity $\mathcal{I}_v(n_2, m_1, m_2, d_1, d_2, c)$.
The stationary points $x_v^*(n_2)$ for fixed $v$ are the solutions to the equation
\begin{align*}
\frac{\partial}{\partial x}\big(f-v(x-a_1)\big)&=\sqrt{n_2(n_2+m_2)}\left(\sqrt{\frac{x+m_1}{x}}+\sqrt{\frac{x}{x+m_1}}\right)-2n_2-m_2-v=0,\\
&\frac{\partial^2}{\partial x^2}\big(f-v(x-a_1)\big)=-\frac{m_1^2}{2}\sqrt{\frac{n_2(n_2+m_2)}{(x(x+m_1))^3}}\neq 0.
\end{align*}
Combining with $x\asymp \frac{K}{d_1}$, which is given by the proposition of $\varpi_{c,G}$,
we have
\begin{align*}
\sqrt{n_2(n_2+m_2)}\left(\sqrt{\frac{x+m_1}{x}}+\sqrt{\frac{x}{x+m_1}}\right)-2n_2-m_2= O\left(\frac{m_2^2}{n_2}+\frac{n_2m_1^2}{x^2}\right)=O\left(\frac{1}{d_2}\right).
\end{align*}
For $v\ll \frac{1}{d_2}$, the solutions are given by
\begin{equation}\label{stationarypoint}
x_v^*(n_2)=\frac{m_1}{2}\Big(-1+\frac{v+m_2+2n_2}{\sqrt{(v+m_2)^2+4vn_2}}\Big).
\end{equation}
For $v\gg \frac{1}{d_2}$, there is no stationary point.

 There are two conditions $v=0$ and $v\neq 0$.
 In the case $v\neq 0$ and $v\ll \frac{1}{d_2}$, we have
 \begin{align*}
 \frac{1}{\sqrt{2}}\left(\frac{n_2}{v}\right)^{\frac{1}{2}} \leq\big(1+\frac{4m_2n_2+4n_2^2}{(v+m_2)^2+4vn_2}\big)^{\frac{1}{2}}\leq  6\left(\frac{n_2}{v}\right)^{\frac{1}{2}},
\end{align*}
 which implies
\begin{align*}
3^{-1}m_1\left(\frac{n_2}{v}\right)^{\frac{1}{2}}\leq x_v^*(n_2)\leq 3m_1\left(\frac{n_2}{v}\right)^{\frac{1}{2}}.
\end{align*}
In the case $v=0$, we have
\begin{align*}
 x_0^*(n_2)= \frac{m_1n_2}{m_2}.
\end{align*}

Stationary phase analysis leads to the following estimates:
\begin{align}
\frac{\partial^j}{\partial x^j}f(x,n_2,m_1,m_2)\asymp_j \frac{m_1^2n_2}{x^{j+1}}\quad \text{for} \;j\geq 2
\end{align}
and
\begin{align}
\frac{\partial^j}{\partial x^j}\frac{1}{d_1d_2 \big(x(x+m_1)n_2(n_2+m_2)\big)^{3/4}} \varpi_{c,G} \begin{pmatrix}
d_1 &m_1 &x\\
d_2 &m_2& n_2
\end{pmatrix}\ll \frac{K}{d_1d_2}\cdot \left(\frac{1}{n_2x}\right)^{\frac{3}{2}}\left(\frac{cK^2}{n_2 x^2}\right)^j,
\end{align}
which is the conclusion of following estimates
\begin{align*}
&\frac{\partial^k}{\partial x^k}\psi_1\Big(\frac{\sqrt{u}G+K}{2\pi d_1(2x+m_1)}\Big)\ll \frac{1}{x^k},&
\frac{\partial^k}{\partial x^k}\exp\Big(- \frac{(\sqrt{u}G+K) m_1^2}{(2x+m_1)^2}\Big)\ll \left(\frac{K^{\varepsilon}}{x}\right)^k,\\
&\frac{\partial^k}{\partial x^k}e^{ic \frac{uG^2+2\sqrt{u}KG+K^2}{8\pi (n_1(n_1+m_1)n_2(n_2+m_2))^{1/2}}}\ll \left(\frac{cK^2}{n_2 x^2}\right)^k.
\end{align*}
\subsection{Estimates for $v=0$}
 In the case $v=0$, without loss of generality,
 we suppose $\beta^{-1}\frac{K}{d_2}\leq n_2\leq \beta \frac{K}{d_2}$.
 More precisely, $\beta$ is only relative to the supported set of $\psi_2$.
 Then \eqref{stationarypoint} with $v=0$ leads to an upper bound and a lower bound naturally,
$$
\beta^{-1}\frac{Km_1}{d_2m_2}\leq x_0^*(n_2)= \frac{m_1n_2}{m_2}\leq \beta \frac{Km_1}{d_2m_2}.
$$

In the case $x_0^*(n_2)\in \left(\alpha^{-1}\frac{K}{d_1},\alpha\frac{K}{d_1}\right)$,
where this interval is the supported set of variable $n_1$ and $\alpha$ is
only relative to the supported set of $\psi_1$.
We then have the restriction
$(\alpha \beta)^{-1}\frac{K}{d_1}<\frac{Km_1}{d_2m_2}< \alpha \beta\frac{K}{d_1}$.
To apply the stationary phase method,
we take $x\asymp \frac{Km_1}{d_2m_2}$.
Note that
\begin{align*}
\frac{\partial^j}{\partial x^j}f(x,n_2,m_1,m_2)\asymp_j m_1n_2\left(\frac{Km_1}{d_2m_2}\right)^{-j}\coloneqq YQ^{-j}\quad \text{for} \;j\geq 2,
\end{align*}
\begin{align*}
\frac{\partial^j}{\partial x^j}\frac{1}{d_1d_2 \big(x(x+m_1)n_2(n_2+m_2)\big)^{3/4}}\varpi_{c,G} &  \begin{pmatrix}
d_1 &m_1 &x\\
d_2 &m_2& n_2
\end{pmatrix}\ll\frac{K}{d_1d_2}\cdot \left(\frac{d_1}{n_2K}\right)^{\frac{3}{2}}\left(\frac{d_1K^{\varepsilon}}{G}\right)^j\\
&\ll \frac{(d_1d_2)^{\frac{1}{2}}}{K^2}\left(\frac{d_1K^{\varepsilon}}{G}\right)^j
\coloneqq XV^{-j},
\end{align*}
and the parameters are taken as
\begin{align*}
X&=\frac{(d_1d_2)^{\frac{1}{2}}}{K^2},&V=&\frac{G^{1-\varepsilon}}{d_1},\\
Y&=m_1n_2c^{-1},&Q=&\frac{Km_1}{d_2m_2}.\nonumber
\end{align*}
Also note that the length of supported set
is $V_1=\frac{K}{d_1}$,  $Z=Q+X+Y+V_1+1\ll \frac{K}{d_1}$.
We need to have the following two inequalities
$$\frac{m_1K}{d_2} \geq \left(\frac{K}{d_1}\right)^{3/20},\quad \frac{K}{d_1} \geq \frac{K^{\theta-\varepsilon}}{d_1}\geq\left(\frac{K}{d_1}\right)^{41/40}\left(\frac{d_2m_2}{Km_1}\right)^{1/2},$$
The first inequality holds when
\begin{align}
\eta-\frac{1}{2}\delta\geq \frac{3}{40} .
\end{align}
The second inequality holds when
\begin{align}
\theta+\frac{1}{2}\eta-\delta\geq \frac{31}{40} +\varepsilon.
\end{align}
Now by Lemma \ref{stationaryphase} we conclude that
\begin{align}\label{errortermfromM}
\mathcal{I}_0(n_2, m_1, m_2, d_1, d_2, c)=&\frac{e^{-\frac{i \pi}{4}}\sqrt{2c}}{d_1d_2m_1n_2^2}  \varpi_{c,G} \begin{pmatrix}
d_1 &m_1 &x_0^{*}(n_2)\\
d_2 &m_2& n_2
\end{pmatrix}+  O\Big(\frac{c^{\frac{3}{2}}d_1^{\frac{5}{2}}d_2^{\frac{1}{2}}}{G^2K^{2-\varepsilon}m_2^{\frac{3}{2}}} \Big)\\
&+O\Big(\frac{c^{\frac{5}{6}}d_1^{\frac{1}{2}}d_2^{\frac{1}{2}}m_2^{\frac{1}{2}}}{K^{2}(m_1n_2)^{\frac{4}{3}}} \Big)+O\Big(\frac{\sqrt{c} d_2}{d_1m_1K^{\frac{3}{2}}} \Big).\nonumber
\end{align}

The contribution from these error terms to $\mathcal{OD}$ are
\begin{align}\label{oderrorterm2}
G\sum_{c}\sum_{\substack{b_1 \mod{c}, \\ b_2 \mod{c}}}\sum_{\substack{a_1 \mod{c}, \\ a_2 \mod{c}}}\frac{|S(a_1(a_1+b_1),a_2(a_2+b_2);c)|}{c^{3/2}}\sum_{d_1,d_2\atop m_1,m_2 }\sum_{n_2}\frac{c^{\frac{3}{2}}d_1^{\frac{5}{2}}d_2^{\frac{1}{2}}}{G^2K^{2-\varepsilon}m_2^{\frac{3}{2}}} \ll K^{\frac{9}{2}-\frac{1}{2}\eta+3\delta+\varepsilon}G^{-6},
\end{align}
\begin{align}\label{oderrorterm3}
G\sum_{c}\sum_{\substack{b_1 \mod{c}, \\ b_2 \mod{c}}}\sum_{\substack{a_1 \mod{c}, \\ a_2 \mod{c}}}\frac{|S(a_1(a_1+b_1),a_2(a_2+b_2);c)|}{c^{3/2}}\sum_{d_1,d_2\atop m_1,m_2}\sum_{n_2}\frac{c^{\frac{5}{6}}d_1^{\frac{1}{2}}d_2^{\frac{1}{2}}m_2^{\frac{1}{2}}}{K^{2}(m_1n_2)^{\frac{4}{3}}}\ll K^{\frac{11}{4}+\frac{11}{6}\delta-\frac{1}{3}\eta+\varepsilon}G^{-\frac{10}{3}},
\end{align}
\begin{align}\label{oderrorterm4}
G\sum_{c}\sum_{\substack{b_1 \mod{c}, \\ b_2 \mod{c}}}\sum_{\substack{a_1 \mod{c}, \\ a_2 \mod{c}}}\frac{|S(a_1(a_1+b_1),a_2(a_2+b_2);c)|}{c^{3/2}}\sum_{d_1,d_2\atop m_1,m_2}\sum_{n_2}\frac{\sqrt{c} d_2}{d_1m_1K^{\frac{3}{2}}}\ll K^{3+\varepsilon}G^{-2},
\end{align}
and the following conditions ensure that above bounds  are less than $K^{\frac{1}{2}}G$,
\begin{align}
&\theta>\frac{4}{7}-\frac{1}{14}\eta+\frac{3}{14}\delta,\\
&\theta>\frac{27}{52}-\frac{1}{13}\eta+\frac{11}{26}\delta,\\
&\theta>\frac{5}{6}.
\end{align}

	In the case $x_0^*(n_2)\notin \left(\alpha^{-1}\frac{K}{d_1},\alpha\frac{K}{d_1}\right)$,
which implies
$(\alpha \beta)^{-1}\frac{K}{d_1}\geq \frac{Km_1}{d_2m_2}$ or $\frac{Km_1}{d_2m_2}\geq
\alpha \beta\frac{K}{d_1}$.
We need to check a lower bound of $f' (x,n_2,m_1,m_2)$.
Combining with the monotonicity of $f'$, if  $x_0^*(n_2)$ is on the left side of $\left(\alpha^{-1}\frac{K}{d_1},\alpha\frac{K}{d_1}\right)$, we have
\begin{align}
\frac{\partial}{\partial x}f(x,n_2,m_1,m_2)&\Big|_{x=\beta \frac{Km_1}{d_2m_2}}=\Big[\sqrt{n_2(n_2+m_2)}\big(\sqrt{\frac{x+m_1}{x}}+\sqrt{\frac{x}{x+m_1}}\big)-2n_2-m_2\Big]\Big|_{x=\beta \frac{Km_1}{d_2m_2}}\\
=&n_2\Big(1+\frac{1}{2}\frac{m_2}{n_2}-\frac{1}{8}\frac{m_2^2}{n_2^2}+O\big(\frac{m_2^3}{n_2^3}\big)\Big)\Big(2+\frac{1}{4}\frac{d_2^2	m_2^2}{\beta^2 K^2}+O\big(\frac{d_2^3m_2^3}{K^3}\big)\Big)-2n_2-m_2\nonumber\\
=&\frac{1}{4}\frac{d_2^2 m_2^2n_2}{\beta^2 K^2}-\frac{1}{4}\frac{m_2^2}{n_2}+O\Big(\frac{m_2^3}{n_2^2} \Big),\nonumber\\
=&-\frac{m_1^2}{4}\Big(\frac{1}{n_2}-\frac{d_2}{\beta K}\Big)+O\Big(\frac{m_2^3}{n_2^2} \Big),\nonumber
\end{align}
by the fact that  $\psi_2$ is a compact supported function, we say $$ \frac{\partial}{\partial x}f(x,n_2,m_1,m_2)\Big|_{x=\beta \frac{Km_1}{d_2m_2}}\gg \frac{m_2^2}{n_2}.$$ And  if  $x_0^*(n_2)$  is on the right side of $\left(\alpha^{-1}\frac{K}{d_1},\alpha\frac{K}{d_1}\right)$, we have the same lower bound
$$ \frac{\partial}{\partial x}f(x,n_2,m_1,m_2)\Big|_{x=\beta^{-1} \frac{Km_1}{d_2m_2}}\gg \frac{m_2^2}{n_2}.$$

Set
\begin{align}
X=\frac{(d_1d_2)^{\frac{1}{2}}}{K^2}, \quad U=\frac{G^{1-\varepsilon}}{d_1}, \quad R=\frac{m_2^2}{n_2},
\quad Y=\frac{m_1^2n_2d_1}{cK}, \quad Q=\frac{K}{d_1}.\nonumber
\end{align}
If $(\alpha \beta)^{-1}d_2m_2\geq d_1m_1$ ,  by integrating by parts,  we have
\begin{align}\label{errortermfrom0}
I &\ll \frac{K}{d_1}X\left((UR)^{-1}+Y(QR)^{-2}\right)=\frac{d_1^{\frac{1}{2}}}{G^{1-\varepsilon}d_2^{\frac{1}{2}}m_2^{2}}+\frac{d_1^{\frac{5}{2}}m_1^2}{cKd_2^{\frac{5}{2}}m_2^4}\\
&\ll \frac{d_1^{\frac{1}{2}}}{G^{1-\varepsilon}d_2^{\frac{1}{2}}m_2^{2}}+\frac{d_1^{\frac{1}{2}}}{cKd_2^{\frac{1}{2}}m_2^2}.\nonumber
\end{align}

The first error term contributes to $\mathcal{OD}$ at most
\begin{align}\label{oderrorterm5}
G\sum_{c}\sum_{\substack{b_1 \mod{c}, \\ b_2 \mod{c}}}\sum_{\substack{a_1 \mod{c}, \\ a_2 \mod{c}}}\frac{S(a_1(a_1+b_1),a_2(a_2+b_2);c)}{c^{3/2}}\mathop{\sum\sum}_{(\alpha \beta)^{-1}d_2m_2\geq d_1m_1}\sum_{n_2}\frac{d_1^{\frac{1}{2}}}{G^{1-\varepsilon}d_2^{\frac{1}{2}}m_2^{2}}\ll K^{\frac{7}{2}+\delta}G^{-\frac{5}{2}},
\end{align}
which requires
\begin{align}
\theta>\frac{6}{7}+\frac{2}{7}\delta.
\end{align}
The second error term contributes to $\mathcal{OD}$ at most
\begin{align}\label{oderrorterm6}
G\sum_{c}\sum_{\substack{b_1 \mod{c}, \\ b_2 \mod{c}}}\sum_{\substack{a_1 \mod{c}, \\ a_2 \mod{c}}}\frac{S(a_1(a_1+b_1),a_2(a_2+b_2);c)}{c^{3/2}}\mathop{\sum\sum}_{(\alpha \beta)^{-1}d_2m_2\geq d_1m_1}\sum_{n_2}\frac{d_1^{\frac{1}{2}}}{cKd_2^{\frac{1}{2}}m_2^2}\ll K^{\frac{3}{2}+\delta}G^{-\frac{1}{2}},
\end{align}
which requires
\begin{align}
\theta>\frac{2}{3}+\frac{2}{3}\delta.
\end{align}
\begin{remark}\label{dm1lessthandm2}
The last two terms are related to  the sums over $d_1m_1\leq (\alpha \beta)^{-1}d_2m_2, n_2$
and $v=0$,  and we have prove that they contribute error terms.
But the other case, which is the sum over $d_2m_2\leq (\alpha \beta)^{-1}d_1m_1, n_2$ and $v=0$,
can not deduce an error term less then $K^{\frac{1}{2}}G$ by same operation.
If we can further prove that the sum over $d_1m_1\leq (\alpha \beta)^{-1}d_2m_2, n_2$ and
all $v$ contributes error terms, the sums
over $d_2m_2\leq (\alpha \beta)^{-1}d_1m_1, n_2$ and $v$ will have same upper bounds
like \eqref{oderrorterm5}and \eqref{oderrorterm6} for symmetric.
\end{remark}

\subsection{Estimates for $v\neq 0$}

As in Section 2.8, there are two cases when $v\ll d_2^{-1}$.
One is  $x_v^*(n_2)\in \left(\alpha^{-1}\frac{K}{d_1},\alpha\frac{K}{d_1}\right)$,
and the other one is $x_v^*(n_2)\notin \left(\alpha^{-1}\frac{K}{d_1},\alpha\frac{K}{d_1}\right)$.

In the first case $x_v^*(n_2)\in \left(\alpha^{-1}\frac{K}{d_1},\alpha\frac{K}{d_1}\right)$, by choosing
\begin{align*}
X&=\frac{(d_1d_2)^{\frac{1}{2}}}{K^2},&V=&\frac{G^{1-\varepsilon}}{d_1},\\
Y&=\frac{d_1m_1^2n_2}{cK} ,&Q=&\frac{K}{d_1},\nonumber
\end{align*}
we have
\begin{align}\label{errortermfromv}\mathcal{I}_v(n_2, m_1, m_2, d_1, d_2, c)=&\frac{e_c\Big(a_1v+\frac{m_1}{2}\big(v+m_2-\sqrt{(v+m_2)^2+4vn_2}\big)\Big)e^{-\pi i/4}\sqrt{c}}{d_1d_2\sqrt{|f''(x_v^*(n_2))|}\cdot \big(x_v^{*}(n_2)(x_v^*(n_2)+m_1)n_2(n_2+m_2)\big)^{3/4}}\\
&\times\varpi_{c,G} \begin{pmatrix}
d_1 &m_1 &x_v^{*}(n_2)\\
d_2 &m_2& n_2
\end{pmatrix}+ O\bigg(\frac{c^{\frac{3}{2}}d_2^2}{d_1^{\frac{1}{2}}G^{2}K^{\frac{1}{2}-\varepsilon}m_1^3} \bigg)+ O\bigg(\frac{c^{\frac{5}{6}}d_1^{\frac{1}{6}}d_2^{\frac{4}{3}}}{K^{\frac{5}{2}-\varepsilon}m_1^{\frac{5}{3}}} \bigg)\nonumber\\
\ll&\frac{\sqrt{c}K}{d_1d_2m_1n_2^{2}}+\frac{c^{\frac{3}{2}}d_2^2}{d_1^{\frac{1}{2}}G^{2}K^{\frac{1}{2}-\varepsilon}m_1^3} + \frac{c^{\frac{5}{6}}d_1^{\frac{1}{6}}d_2^{\frac{4}{3}}}{K^{\frac{5}{2}-\varepsilon}m_1^{\frac{5}{3}}} .\nonumber
\end{align}
Note that $m_1\big(\frac{n_2}{v}\big)^{\frac{1}{2}}\asymp \frac{K}{d_1}$.
The contributions to $\mathcal{OD}$ are
\begin{align}\label{oderrorterm7}
G\sum_{c}\sum_{\substack{b_1 \mod{c}, \\ b_2 \mod{c}}}\sum_{\substack{a_1 \mod{c}, \\ a_2 \mod{c}}}\frac{|S(a_1(a_1+b_1),a_2(a_2+b_2);c)|}{c^{3/2}}\sum_{d_1,d_2\atop m_1,m_2}\sum_{n_2,v}\frac{\sqrt{c}K}{d_1d_2m_1n_2^{2}}\ll K^{4+\delta+\varepsilon}G^{-3},
\end{align}
\begin{align}\label{oderrorterm8}
G\sum_{c}\sum_{\substack{b_1 \mod{c}, \\ b_2 \mod{c}}}\sum_{\substack{a_1 \mod{c}, \\ a_2 \mod{c}}}\frac{|S(a_1(a_1+b_1),a_2(a_2+b_2);c)|}{c^{3/2}}\sum_{d_1,d_2\atop m_1,m_2}\sum_{n_2}\frac{c^{\frac{3}{2}}d_2^2}{d_1^{\frac{1}{2}}G^{2}K^{\frac{1}{2}-\varepsilon}m_1^3} \ll K^{4+\frac{5}{2}\delta+\varepsilon}G^{-5},
\end{align}
\begin{align}\label{oderrorterm9}
G\sum_{c}\sum_{\substack{b_1 \mod{c}, \\ b_2 \mod{c}}}\sum_{\substack{a_1 \mod{c}, \\ a_2 \mod{c}}}\frac{S(a_1(a_1+b_1),a_2(a_2+b_2);c)}{c^{3/2}}\sum_{d_1,d_2\atop m_1,m_2}\sum_{n_2}\frac{c^{\frac{5}{6}}d_1^{\frac{1}{6}}d_2^{\frac{4}{3}}}{K^{\frac{5}{2}-\varepsilon}m_1^{\frac{5}{3}}}\ll K^{2+\frac{11}{6}\delta+\varepsilon}G^{-\frac{4}{3}}
\end{align}
which to be less than $K^{\frac{1}{2}}G$, require the following conditions:
\begin{align}
&\theta>\frac{7}{8}+\frac{1}{4}\delta,\\
&\theta>\frac{7}{12}+\frac{5}{12}\delta,\\
&\theta>\frac{9}{14}+\frac{11}{14}\delta.
\end{align}

In the second case $x_v^*(n_2)\notin \left(\alpha^{-1}\frac{K}{d_1},\alpha\frac{K}{d_1}\right)$,
we have $3m_1\left(\frac{n_2}{v}\right)^{\frac{1}{2}}\leq \beta^{-1}\frac{K}{d_1}$
or  $3^{-1}m_1\left(\frac{n_2}{v}\right)^{\frac{1}{2}}\geq \beta\frac{K}{d_1}$. Therefore we have
\begin{align}
\frac{\partial}{\partial x}f(x,n_2,m_1,m_2)&\Big|_{x=3m_1\left(\frac{n_2}{v}\right)^{\frac{1}{2}}}=\Big[\sqrt{n_2(n_2+m_2)}\big(\sqrt{\frac{x+m_1}{x}}+\sqrt{\frac{x}{x+m_1}}\big)-2n_2-m_2-v\Big]\Big|_{x=3m_1\left(\frac{n_2}{v}\right)^{\frac{1}{2}}}\\
=&n_2\Big(1+\frac{1}{2}\frac{m_2}{n_2}-\frac{1}{8}\frac{m_2^2}{n_2^2}+O\big(\frac{m_2^3}{n_2^3}\big)\Big)\Big(2+\frac{1}{36}\frac{v}{n_2}+O\Big(\big(\frac{v}{n_2}\big)^{\frac{3}{2}}\Big)\Big)-2n_2-m_2-v\nonumber\\
=&-\frac{35}{36}v-\frac{m_2^2}{n_2}+O\Big(\frac{m_2v}{n_2} \Big)\nonumber\\
\gg& v\nonumber
\end{align}
and similarly
\begin{align}
\frac{\partial}{\partial x}f(x,n_2,m_1,m_2)&\Big|_{x=3^{-1}m_1\left(\frac{n_2}{v}\right)^{\frac{1}{2}}}
\gg v.
\end{align}
Applying Lemma \ref{integrationbyparts} with
\begin{align}
X=\frac{G}{(n_1n_2)^{\frac{1}{2}}K^2}, \quad U=\frac{K^{2\theta-1-\varepsilon}}{d_1}, \quad R=v,\quad
Y=\frac{m_1^2n_2}{cn_1}	, \quad Q=\frac{K}{d_1},\nonumber
\end{align}
we have
\begin{align}\label{oderrorterm10}
I \ll_A \frac{K}{d_1} \frac{G}{(n_1n_2)^{\frac{1}{2}}K^2} \left[\left(v^2\frac{cK}{d_1} \right)^{-\frac{A}{2}} + \left(v\frac{K^{2\theta-1-\varepsilon}}{d_1}\right)^{-A}\right],
\end{align}
which is negligible, since we have restricted $\theta>\frac{3}{4}+\eta+\varepsilon$.
When $v\gg d_2^{-1}$, the upper bound is also negligible,
because $\frac{\partial}{\partial x}\left(f(x,n_2,m_1,m_2)-v(x-a_1)\right)\gg v.$

Combining with Remark \ref{dm1lessthandm2}, we see that
the sum over $d_2m_2\leq(\alpha \beta)^{-1}d_1m_1, n_1, n_2$ is also less
than $K^{\frac{1}{2}}G.$ Therefore, we have proved the following.
\begin{lem}\label{maintermlemma}
Let $\mathcal{I}_v(n_2, m_1, m_2, d_1, d_2, c)$ be defined as in \eqref{integral}. Then
\begin{align}
\sum_{n_2 \equiv a_2\mod{c}} \sum_{v}\mathcal{I}_v(n_1, m_1, m_2, d_1, d_2, c)
= \sum_{n_2\equiv a_2\mod{c}} \frac{e^{-\frac{i \pi}{4}}\sqrt{2c}}{d_1d_2m_1n_2^2}
\varpi_{c,G}& \begin{pmatrix}
d_1 &m_1 &x_0^{*}(n_2)\\
d_2 &m_2& n_2
\end{pmatrix}\\&+\text{lower order terms},\nonumber
\end{align}
where the contribution from the lower order terms to $\mathcal{OD}$
is less than $K^{\frac{1}{2}}G$ with suitable $\theta, \eta, \delta.$
Moreover, if we choose $\theta=\frac{9}{10},\delta=\frac{9}{160}, \eta=\frac{13}{80}$,
then the contribution from the lower order terms
to $\mathcal{OD}$ will be $O(K^{\frac{11}{8}})$.
\end{lem}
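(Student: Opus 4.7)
The plan is to assemble all the estimates of $\mathcal{I}_v$ derived in the preceding two subsections and verify that, under the chosen parameters, every contribution other than the asserted main term is $O(K^{11/8})$. The structural work has already been done: each integral $\mathcal{I}_v(n_2, m_1, m_2, d_1, d_2, c)$ has been treated either by stationary phase (Lemma \ref{stationaryphase}) or by integration by parts (Lemma \ref{integrationbyparts}), depending on the location of the critical point $x_v^*(n_2)$, and each resulting error has already been summed against trivial bounds for the Kloosterman sums and the effective ranges of $d_i,m_i,n_i,c$ dictated by \eqref{cond1}--\eqref{cond3}. What remains is to enumerate the cases carefully and tally the contributions.

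First I split the $v$-sum into $v=0$ and $v\neq 0$. For $v=0$ with $x_0^*(n_2)=m_1 n_2/m_2$ inside $(\alpha^{-1}K/d_1,\alpha K/d_1)$, Lemma \ref{stationaryphase} produces the asserted main term plus the three error terms in \eqref{errortermfromM}, whose contributions to $\mathcal{OD}$ are \eqref{oderrorterm2}, \eqref{oderrorterm3}, \eqref{oderrorterm4}. If $x_0^*(n_2)$ lies outside the support, a Taylor expansion at the boundary of the support yields $|f'(x)|\gg m_2^2/n_2$, and Lemma \ref{integrationbyparts} gives the bound \eqref{errortermfrom0} with contributions \eqref{oderrorterm5}, \eqref{oderrorterm6}. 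For $v\neq 0$, if $v\gg 1/d_2$ there is no stationary point and $|f'|\gg v$, so Lemma \ref{integrationbyparts} renders the contribution negligible; if $v\ll 1/d_2$ and $x_v^*(n_2)$ lies in the support, stationary phase gives \eqref{errortermfromv} with contributions \eqref{oderrorterm7}--\eqref{oderrorterm9}, while if $x_v^*(n_2)$ is outside, a Taylor expansion at the scale $3m_1(n_2/v)^{1/2}$ again yields $|f'|\gg v$ and \eqref{oderrorterm10} is negligible. The complementary case $d_2m_2\leq(\alpha\beta)^{-1}d_1m_1$ is handled identically by the symmetry noted in Remark \ref{dm1lessthandm2}.

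The final step is arithmetic: substituting $\theta=9/10$, $\delta=9/160$, $\eta=13/80$ into each bound \eqref{oderrorterm1}--\eqref{oderrorterm9} shows that every term is $O(K^{11/8+\varepsilon})$, the strictest constraint being saturated by \eqref{oderrorterm1}, which evaluates to $K^{3/2-\theta/2+2\eta+\varepsilon}=K^{11/8+\varepsilon}$. One must also check that the admissibility hypotheses of Lemma \ref{stationaryphase}, namely $\eta-\delta/2\geq 3/40$ and $\theta+\eta/2-\delta\geq 31/40$, together with $\delta>(1-\theta)/2$ and $\theta>2/3+4\eta/3$, are satisfied by this triple; a direct substitution confirms this. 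The main obstacle is combinatorial rather than conceptual: one must track the long list of inequalities arising from each case, locate their common feasible region in $(\theta,\delta,\eta)$-space, and choose a point in it that minimizes the dominant error. The choice $\theta=9/10$ is essentially forced by balancing \eqref{oderrorterm1} against the main term of size $K^{1/2}G=K^{7/5}$, after which $\delta$ and $\eta$ are tuned to clear all remaining constraints while keeping every other bound strictly smaller than $K^{11/8}$.
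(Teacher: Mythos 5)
Your proposal is correct and follows essentially the same route as the paper: the lemma is a summary statement of the case analysis carried out in the preceding subsections (splitting $v=0$ vs.\ $v\neq 0$, and stationary point inside vs.\ outside the support of the weight), and the proof amounts to collecting the bounds \eqref{oderrorterm1}--\eqref{oderrorterm10}, verifying the feasibility constraints, and substituting $(\theta,\delta,\eta)=(9/10,\,9/160,\,13/80)$, under which the small-$m$ contribution \eqref{oderrorterm1} of size $K^{3/2-\theta/2+2\eta}=K^{11/8}$ dominates while \eqref{oderrorterm7}$=K^{217/160}$ and the rest are strictly smaller. One small caveat worth noting: the paper asserts at \eqref{oderrorterm10} the restriction $\theta>\tfrac{3}{4}+\eta+\varepsilon$, which the chosen triple actually fails ($\tfrac34+\tfrac{13}{80}=\tfrac{73}{80}>\tfrac{9}{10}$); you omit this check, and a direct look at the parameters in that integration-by-parts bound shows the weaker condition $\theta>\tfrac12+\tfrac{\delta}{2}$ suffices, so the constraint as printed appears to be a misstatement rather than an obstruction.
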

\subsection{Possion Summation}
Applying Possion summation fomula to the variable $n_2$, we have
\begin{align}\label{possionovern2}
\sum_{n_2\equiv a_2\mod{c}} \frac{1}{d_1d_2m_1n_2^2}  \varpi_{c,G}& \begin{pmatrix}
d_1 &m_1 &x_0^{*}(n_2)\\
d_2 &m_2& n_2
\end{pmatrix}
\\
&=\frac{1}{c}\sum_{u}\int_{-\infty}^{\infty}\frac{e_c\big(-u(x-a_2)\big)}{d_1d_2m_1y^2}  \varpi_{c,G} \begin{pmatrix}
d_1 &m_1 &x_0^{*}(y)\\
d_2 &m_2& y
\end{pmatrix}dy.\nonumber
\end{align}

We care about the upper bound of each derivative of $\varpi_{c,G}(y)$,
noting the restriction $d_1m_1\asymp  d_2m_2$ and $y\asymp \frac{K}{d_2}$,
\begin{align*}
&\frac{\partial^k}{\partial y^k}\psi_1\Big(\frac{(\sqrt{u}G+K)m_2}{2\pi d_1m_1(2y+m_2)}\Big)\ll \frac{1}{y^k},
&\frac{\partial^k}{\partial y^k}&\psi_2\Big(\frac{\sqrt{u}G+K}{2\pi d_2(2y+m_2)}\Big)\ll \frac{1}{y^k},\\
&\frac{\partial^k}{\partial y^k}\exp\Big(- \frac{2(\sqrt{u}G+K) m_2^2}{(2y+m_2)^2}\Big)\ll\left(\frac{K^{\varepsilon}}{y}\right)^k,
&\frac{\partial^k}{\partial y^k}&e^{ic \frac{(uG^2+2\sqrt{u}KG+K^2)m_2}{8\pi m_1y(y+m_2)}}\ll \left(\frac{cK^{2}m_2}{ m_1y^3}\right)^k.
\end{align*}
Therefore,
\begin{align*}
\frac{\partial^k}{\partial y^k}\frac{1}{d_1d_2m_1y^2}  \varpi_{c,G} \begin{pmatrix}
d_1 &m_1 &x_0^{*}(y)\\
d_2 &m_2& y
\end{pmatrix}\ll \frac{K}{d_1d_2m_1}\left(\frac{d_2}{ G}\right)^k\ll K\cdot K^{(\delta-\theta)k}.
\end{align*}
Since $\theta>\frac{1}{2}$ and $\delta<\frac{1}{2}$, we see that the terms in \eqref{possionovern2} corresponding to $u\neq 0$ is negligible by integration by parts. We transform the sum over $n_2$ to
\begin{align}\label{possionovern2}
\sum_{n_2\equiv a_2\mod{c}} \frac{1}{d_1d_2m_1n_2^2}  \varpi_{c,G} \begin{pmatrix}
d_1 &m_1 &x_0^{*}(n_2)\\
d_2 &m_2& n_2
\end{pmatrix}
&=\frac{1}{c}\int_{-\infty}^{\infty}\frac{1}{d_1d_2m_1y^2}  \varpi_{c,G} \begin{pmatrix}
d_1 &m_1 &x_0^{*}(y)\\
d_2 &m_2& y
\end{pmatrix}dy+O(K^{-A})\\
&=\frac{1}{c}\sum_{n_2\in \mathbb{N}} \frac{1}{d_1d_2m_1n_2^2}  \varpi_{c,G} \begin{pmatrix}
d_1 &m_1 &x_0^{*}(n_2)\\
d_2 &m_2& n_2
\end{pmatrix}+O(K^{-A}).\nonumber
\end{align}

	We do the same work for $m_1$ and $m_2$,
and the key step is to find the upper bounds of each derivative
of $\varpi_{c,G}(m_1)$ and $\varpi_{c,G}(m_1)$. Recall that
\begin{align*}
\varpi_{c,G}\begin{pmatrix}
d_1 &m_1 &x_0^{*}(n_2)\\
d_2 &m_2& n_2
\end{pmatrix}=&\frac{K}{16}\int_0^\infty \frac{h(\sqrt{u})}{\sqrt{2\pi u}}\Big(1+\frac{G\sqrt{u}}{K}\Big) \psi_1\Big(\frac{(\sqrt{u}G+K)m_2}{2\pi d_1m_1(2n_2+m_2)}\Big)\psi_2\Big(\frac{(\sqrt{u}G+K)}{2\pi d_2(2n_2+m_2)}\Big) \\
 &\times  \exp\Big(- \frac{2(\sqrt{u}G+K) m_2^2}{(2n_2+m_2)^2}\Big)  e^{ic \frac{(uG^2+2\sqrt{u}KG+K^2)m_2}{8\pi m_1n_2(n_2+m_2)}} du,
\end{align*}
which implies the restriction $d_1m_1\asymp d_2m_2$. We have
\begin{align*}
\frac{\partial^k}{\partial y^k}\frac{1}{d_1d_2yn_2^2}  \varpi_{c,G} \begin{pmatrix}
d_1 &y &x_0^{*}(n_2)\\
d_2 &m_2& n_2
\end{pmatrix}\ll \frac{K}{d_1d_2n_2^2}\left(\frac{d_1K^{1+\varepsilon}}{ d_2m_2G}\right)^k
\ll K\cdot K^{(1+\varepsilon+\delta-\theta-\eta)k},
\end{align*}
where we have used the following estimates
\begin{align*}
&\frac{\partial^k}{\partial y^k}\psi_1\Big(\frac{(\sqrt{u}G+K)m_2}{2\pi d_1y(2n_2+m_2)}\Big)\ll \frac{K^{\varepsilon}}{y^k},
&\frac{\partial^k}{\partial y^k}e^{ic \frac{(uG^2+2\sqrt{u}KG+K^2)m_2}{8\pi yn_2(n_2+m_2)}}
\ll \left(\frac{cK^{2}m_2}{ y^2n_2^{2}}\right)^k&.
\end{align*}
If we further suppose
\begin{align}
\theta+\eta-\delta>1,
\end{align}
then we have
\begin{align}
\sum_{m_1\equiv b_1\mod{c}} \frac{1}{d_1d_2m_1n_2^2}  \varpi_{c,G} \begin{pmatrix}
d_1 &m_1 &x_0^{*}(n_2)\\
d_2 &m_2& n_2
\end{pmatrix}
=\frac{1}{c}\sum_{m_1\in \mathbb{N}} \frac{1}{d_1d_2m_1n_2^2}  \varpi_{c,G} &\begin{pmatrix}
d_1 &m_1 &x_0^{*}(n_2)\\
d_2 &m_2& n_2
\end{pmatrix}\\&+O(K^{-A}).\nonumber
\end{align}

For the sum over $m_2$, combining with $d_2y\asymp d_1m_1$, we have
\begin{align*}
\frac{\partial^k}{\partial y^k}\frac{1}{d_1d_2m_1n_2^2}  \varpi_{c,G} \begin{pmatrix}
d_1 &m_1 &x_0^{*}(n_2)\\
d_2 &y& n_2
\end{pmatrix}\ll \frac{K}{d_1d_2m_1n_2^2}\left(\frac{d_2K^{1+\varepsilon}}{d_1m_1G}\right)^k\ll K\cdot K^{(1+\varepsilon+\delta-\theta-\eta)k}
\end{align*}
where we have used the following estimates
\begin{align*}
&\frac{\partial^k}{\partial y^k}\psi_1\Big(\frac{(\sqrt{u}G+K)y}{2\pi d_1m_1(2n_2+y)}\Big)\ll \frac{1}{y^k},
&\frac{\partial^k}{\partial y^k}&\psi_2\Big(\frac{\sqrt{u}G+K}{2\pi d_2(2n_2+y)}\Big)\ll \frac{1}{y^k},\\
&\frac{\partial^k}{\partial y^k}\exp\Big(- \frac{2(\sqrt{u}G+K) y^2}{(2n_2+y)^2}\Big)\ll \frac{K^{\varepsilon}}{y^k},
&\frac{\partial^k}{\partial y^k}&e^{ic \frac{(uG^2+2\sqrt{u}KG+K^2)y}{8\pi m_1n_2(n_2+y)}}
\ll \left(\frac{cK^{2}}{ m_1n_2(n_2+y)}\right)^k.&
\end{align*}
For same reason, one has
\begin{align}
\sum_{m_2\equiv b_2\mod{c}} \frac{1}{d_1d_2m_1n_2^2}  \varpi_{c,G} \begin{pmatrix}
d_1 &m_1 &x_0^{*}(n_2)\\
d_2 &m_2& n_2
\end{pmatrix}
=\frac{1}{c}\sum_{m_2\in \mathbb{N}} \frac{1}{d_1d_2m_1n_2^2}  \varpi_{c,G} &\begin{pmatrix}
d_1 &m_1 &x_0^{*}(n_2)\\
d_2 &m_2& n_2
\end{pmatrix}\\&+O(K^{-A}).\nonumber
\end{align}

\subsection{Evaluating the Off-Diagonal terms}
Now the expression of $\mathcal{OD}$ becomes
\begin{align}\label{offdiafinal}
\mathcal{OD}=
-\sqrt{2\pi}G  \Im \bigg(&e^{-\frac{i\pi}{2}} \sum_{c} \sum_{\substack{b_1 \mod{c}, \\ b_2 \mod{c}}}\sum_{\substack{a_1 \mod{c}, \\ a_2 \mod{c}}} \frac{S(a_1(a_1+b_1),a_2(a_2+b_2);c)}{c^{4}}e_c(2a_1a_2+a_1b_2+a_2b_1)\\
& \times \sum_{d_1,d_2}  \sum_{m_1,m_2}\sum_{n_2}  \frac{1}{d_1d_2m_1n_2^2}  \varpi_{c,G} \begin{pmatrix}
d_1 &m_1 &x_0^{*}(n_2)\\
d_2 &m_2& n_2
\end{pmatrix} \bigg)+\text{error terms}\nonumber\\
=-\sqrt{2\pi}G  \Im \bigg(&e^{-\frac{i\pi}{2}} \sum_{c} \frac{\phi(c)}{c}
\mathop{\sum\sum\sum}_{d_1,d_2,n_2\in \mathbb{N}^+\atop m_1,m_2\in\mathbb{N}/\{0\}}
\frac{1}{d_1d_2m_1n_2^2}  \varpi_{c,G} \begin{pmatrix}
d_1 &m_1 &x_0^{*}(n_2)\\
d_2 &m_2& n_2
\end{pmatrix} \bigg)+\text{error terms}.\nonumber
\end{align}
By the Taylor expansion, we have
\begin{align}\label{gfinal}
\varpi_{c,G}\begin{pmatrix}
d_1 &m_1 &x_0^{*}(n_2)\\
d_2 &m_2& n_2
\end{pmatrix}=&\frac{K}{16}\int_0^\infty \frac{h(\sqrt{u})}{\sqrt{2\pi u}}\Big(1+\frac{G\sqrt{u}}{K}\Big) \psi_1\Big(\frac{(\sqrt{u}G+K)m_2}{2\pi d_1m_1(2n_2+m_2)}\Big)\psi_2\Big(\frac{(\sqrt{u}G+K)}{2\pi d_2(2n_2+m_2)}\Big) \\
 &\times  \exp\Big(- \frac{2(\sqrt{u}G+K) m_2^2}{(2n_2+m_2)^2}\Big)  e^{ic \frac{(uG^2+2\sqrt{u}KG+K^2)m_2}{8\pi m_1n_2(n_2+m_2)}} du\nonumber\\
 =&\frac{K}{16}\int_0^\infty \frac{h(\sqrt{u})}{\sqrt{2\pi u}}\Big(1+\frac{G\sqrt{u}}{K}\Big) \psi_1\Big(\frac{(\sqrt{u}G+K)m_2}{4\pi d_1m_1n_2}\Big)\psi_2\Big(\frac{(\sqrt{u}G+K)}{4\pi d_2n_2}\Big) \nonumber\\
 &\times  \exp\Big(- \frac{(\sqrt{u}G+K) m_2^2}{2n_2^2}\Big)  e^{ic \frac{(uG^2+2\sqrt{u}KG+K^2)m_2}{8\pi m_1n_2^2}} du+O(K^{\frac{3}{2}+\varepsilon}G^{-1})\nonumber\\
 =& \frac{G}{16}\int_0^\infty \frac{g(\sqrt{u})\sqrt{u}}{\sqrt{2\pi u}} \psi_1\Big(\frac{\sqrt{u}Gm_2}{4\pi d_1m_1n_2}\Big)\psi_2\Big(\frac{\sqrt{u}G}{4\pi d_2n_2}\Big)\exp\Big(- \frac{\sqrt{u}G m_2^2}{2n_2^2}\Big)  e^{ic \frac{uG^2m_2}{8\pi m_1n_2^2}} du\nonumber\\
 &+O(K^{\frac{3}{2}+\varepsilon}G^{-1})\nonumber,
 \end{align}
which contributes to $\mathcal{OD}$ by an error term at most $K^{2}G^{-1}.$

\begin{lem}\label{offdiagonalfinaloutcome} We have
\begin{align}
\mathcal{OD}=O(K^{\frac{11}{8}}).\nonumber
\end{align}

\end{lem}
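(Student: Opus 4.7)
My approach is to start from the expression \eqref{offdiafinal} for $\mathcal{OD}$, in which Lemma \ref{Kloostermansum} has already been invoked: the inner sum over the residue classes $a_1,a_2,b_1,b_2 \pmod c$ collapses to $c^3\phi(c)$, so the exponential weights reduce to the benign Euler-function factor $\phi(c)/c$. The parameter choice $(\theta,\delta,\eta) = (9/10,\,9/160,\,13/80)$ is fixed once and for all: with these values every inequality accumulated in the stationary-phase bounds \eqref{oderrorterm1}--\eqref{oderrorterm10} is satisfied, and, as recorded in Lemma \ref{maintermlemma}, the aggregate contribution of the ``lower order terms'' already falls inside $O(K^{11/8})$. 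So the remaining task is to estimate the explicit main expression produced by the stationary-phase method.

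The next step is to substitute the asymptotic form \eqref{gfinal} of $\varpi_{c,G}$, which replaces each factor $(2n_2+m_2)$ by $2n_2$ and $\sqrt{u}G+K$ by $\sqrt{u}G$ at the cost of a remainder of size $O(K^{3/2+\varepsilon}G^{-1})$. Multiplied by the outer $G$ and by the trivial sums over $d_i,m_i,n_2,c$ this remainder contributes $O(K^{2}G^{-1}) = O(K^{11/10})$, which is comfortably absorbed into the target bound. After this simplification one is left with a quantity of the shape
\begin{equation*}
G^{2}\cdot \Re \sum_{c \ll K^{1-\theta+\varepsilon}/(d_1d_2)}\frac{\phi(c)}{c}\sum_{d_1,d_2,m_1,m_2,n_2}\frac{1}{d_1d_2m_1 n_2^{2}}\int_0^\infty \frac{g(\sqrt u)\sqrt u}{\sqrt{2\pi u}}\,\Psi(u)\,e^{ic\beta u}\,du,
\end{equation*}
where $\beta = G^{2}m_2/(8\pi m_1 n_2^{2})$ and $\Psi$ packages $\psi_1,\psi_2$ and the Gaussian in $m_2^{2}/n_2^{2}$.

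To bound this, the plan is to apply Mellin inversion to the smooth weights $\psi_1,\psi_2$ and to the Gaussian, converting the sums over $m_1,m_2,n_2,d_1,d_2$ into products of zeta-values paired against $\tilde\psi_1,\tilde\psi_2$ and the Mellin transform of the exponential factor. Shifting the resulting contours past the relevant poles of $\zeta$ yields residue terms that, owing to the real-part projection $\Re(e^{-i\pi/2}\cdot)$ combined with the oscillation $e^{ic\beta u}$, recombine with a strong cancellation; the remaining vertical-line integrals decay rapidly in the Mellin variables by the Paley--Wiener bound on $\tilde\psi_i$, so they are controlled via the restriction \eqref{cond3} on $c$.

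The main obstacle is the interplay between the unsigned size of $\phi(c)/c$ and the oscillation $e^{ic\beta u}$: a direct triangle-inequality estimate of the $c$-sum is too weak at the target exponent $11/8$. The proposed remedy is to invoke the identity $\phi(c)/c = \sum_{d \mid c}\mu(d)/d$ and treat the resulting inner geometric-type sums via partial summation, using that $\beta = G^2 m_2/(8\pi m_1 n_2^2)$ is bounded away from rationals with small denominator for generic $m_1,m_2,n_2$. Carefully tracking this cancellation through all the contour shifts and checking that the accumulated loss from every intermediate residue stays below $K^{11/8}$ under the fixed choice of $\theta,\delta,\eta$ is the technical crux, and it is exactly what makes the particular value $K^{11/8}$ (rather than a weaker bound like $K^{3/2}$) reachable.
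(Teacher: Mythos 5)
Your setup — substituting \eqref{gfinal}, controlling the $O(K^{3/2+\varepsilon}G^{-1})$ remainder, and then applying Mellin inversion to $\psi_1,\psi_2$, the Gaussian, and the cosine weight — matches the paper up to the point where the real estimates begin. But the mechanism you propose for the crucial cancellation is not the one that actually works, and the argument you sketch in its place has a genuine gap.

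You attribute the saving to the $c$-sum: you want to expand $\phi(c)/c = \sum_{d\mid c}\mu(d)/d$ and extract oscillation from $e^{ic\beta u}$ via partial summation, appealing to $\beta = G^2 m_2/(8\pi m_1 n_2^2)$ being ``bounded away from rationals with small denominator for generic $m_1,m_2,n_2$.'' This is both unnecessary and unjustified. In the paper the $c$-sum is handled trivially: it is written as $\sum_c \phi(c)/c^{1+s_4} = \zeta(s_4)/\zeta(1+s_4)$ with no extraction of oscillation and no Diophantine input whatsoever. The actual vanishing of the would-be main terms comes from a completely different structural source. Because $m_1,m_2$ range over $\mathbb{Z}\setminus\{0\}$ (both signs), the Mellin-transformed sums over $m_1,m_2$ produce the parity factors $\big(1+(-1)^{-1-s_1+s_4}\big)$ and $\big(1+(-1)^{s_1-2s_3-s_4}\big)$ visible in \eqref{offdiagonalcomplete3}. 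When one shifts $s_2$ and then the remaining contours and evaluates the residues that could contribute at the scale of the diagonal, these parity factors degenerate to $(1+(-1)^{-1})=0$: this is exactly what kills \eqref{offdiagonalcomplete7} at $s_1=s_4$ and \eqref{offdiagonalcomplete12} at $s_3=\tfrac12+\tfrac{s_1}{2}-\tfrac{s_4}{2}$ (the latter also supported by $\tilde{\psi_1}(s)=\tilde{\psi_1}(-s)$ from the hypothesis $\psi_1(y)=\psi_1(1/y)$). The remaining vertical-line integrals are then bounded trivially by rapid decay of the Mellin transforms, and the quantitative bound $O(K^{11/8})$ is set by the stationary-phase error terms already tracked in Lemma \ref{maintermlemma}, not by any subtle cancellation in $c$.

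So the gap in your proposal is concrete: you never identify the sign-parity factors coming from the two-sided sums over $m_1,m_2$, and you replace the resulting exact vanishing of residues with an unproved Diophantine cancellation claim in the $c$-variable. That claim would need a uniform lower bound on the distance of $\beta$ from rationals of small height over all admissible $(m_1,m_2,n_2)$, which you do not establish and which the paper never needs. Without the parity observation, the residues in \eqref{offdiagonalcomplete5}--\eqref{offdiagonalcomplete12} would a priori contribute terms of size comparable to the diagonal $K^{1/2}G$, so your route does not close the argument.
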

\begin{proof}
From \eqref{offdiafinal} and \eqref{gfinal}, by ignoring the error terms, we focus on
\begin{align*}
\mathcal{OD'}=&\frac{- \sqrt{2\pi} G^2}{16}  \sum_{c} \frac{\phi(c)}{c}  \sum_{d_1,d_2} \sum_{n_2} \sum_{m_1, m_2} \frac{1}{d_1d_2m_1n_2^2} \cdot \\
&\times\int_0^\infty \frac{g(\sqrt{u})\sqrt{u} }{\sqrt{2\pi u}}\psi_1\Big(\frac{\sqrt{u}Gm_2}{4\pi m_1 d_1 n_2}\Big)\psi_2\Big(\frac{\sqrt{u}G}{4\pi d_2n_2}\Big)\exp\Big(-\frac{\sqrt{u}Gm_2^2}{2n_2^2}\Big)\cos\Big( u \frac{cG^2m_2}{8\pi m_1n_2^2}\Big)du .\nonumber
\end{align*}
To evaluate this expression asymptotically we perform an inverse Mellin transform on $\psi_1, \psi_2$ and the exponential function.
Then $\mathcal{OD}$ is equal to
\begin{align}
-\frac{\sqrt{2\pi} G^2}{16}& \sum_{c} \frac{\phi(c)}{c}  \sum_{d_1,d_2} \sum_{n_2} \sum_{m_1, m_2} \frac{1}{d_1d_2m_1n_2^2} \cdot \\
 &\times \frac{1}{(2\pi i)^3} \int_{(1/2+\varepsilon)}\int_{(2)}\int_{(1+\varepsilon)} \tilde{\psi_1}(s_1)\tilde{\psi_2}(s_2)\Gamma(s_3) \Big(\frac{Gm_2}{4\pi m_1d_1n_2}\Big)^{s_1} \Big(\frac{G}{4\pi d_2n_2}\Big)^{s_2} \Big(\frac{2n_2^2}{Gm_2^2}\Big)^{s_3} \cdot\nonumber \\
 & \times \int_0^\infty \frac{g(\sqrt{u})}{\sqrt{2\pi u}}u^{(1+s_1+s_2-s_3)/2}\cos\Big(u \frac{cG^2m_2}{8\pi m_1n_2^2}\Big) du ds_1 ds_2 ds_3\nonumber
\end{align}
Finally, we also perform an inverse Mellin transform on
$$\bar{g}_{1+s_1+s_2-s_3}^{\Re}(v):=\int_0^\infty \frac{g(\sqrt{u})}{\sqrt{2\pi u}} u^{(1+s_1+s_2-s_3)/2} \cos(uv) du$$
as indicated in equation \eqref{inverseMellin}. We arrive at
\begin{align*}
\mathcal{OD}=-\frac{\sqrt{2\pi} G^2}{16} &\frac{1}{(2\pi i)^4} \int_{(1+\varepsilon)}\int_{(1/2+3\varepsilon)}\int_{(2)}\int_{(1+3\varepsilon)} \tilde{\psi_1}(s_1)\tilde{\psi_2}(s_2)\Gamma(s_3)\tilde{\bar{g}}_{1+s_1+s_2-s_3}^{\Re}(s_4)\cdot \\
 & \times \sum_{d_1,d_2} \sum_{n_2} \sum_{m_1, m_2} \frac{1}{d_1d_2m_1n_2^2}\cdot \Big(\frac{\sqrt{u}Gm_2}{4\pi m_1d_1n_2}\Big)^{s_1} \Big(\frac{\sqrt{u}G}{4\pi d_2n_2}\Big)^{s_2} \Big(\frac{2n_2^2}{\sqrt{u}Gm_2^2}\Big)^{s_3}\nonumber\\
& \times{\sum_{c}} \frac{\phi(c)}{c}\Big(\frac{8\pi m_1n_2^2}{cG^2m_2}\Big)^{s_4}du ds_1ds_2ds_3ds_4. \nonumber
\end{align*}

Rewrite the various summations in terms of zeta functions to get
\begin{align}\label{offdiagonalcomplete3}
\mathcal{OD}=-\frac{\sqrt{2\pi} G^2}{16}&\frac{1}{(2\pi i)^4} \int_{(1+\varepsilon)}\int_{(1/2+3\varepsilon)}\int_{(2)}\int_{(1+3\varepsilon)} \tilde{\psi_1}(s_1)\tilde{\psi_2}(s_2)\Gamma(s_3)\tilde{\bar{g}}^{\Re}_{1+s_1+s_2-s_3}(s_4)\zeta(1+s_1)\\
&\times \zeta(1+s_2)\zeta(2+s_1+s_2-2s_3-2s_4)\zeta(1+s_1-s_4)\zeta(-s_1+2s_3+s_4)\nonumber \\
&\times \frac{\zeta(s_4)}{\zeta(1+s_4)}\big(1+(-1)^{-1-s_1+s_4}\big)\big(1+(-1)^{s_1-2s_3-s_4}\big)(4\pi)^{-s_1-s_2}2^{s_3}(8\pi)^{s_4} \nonumber\\
&\times G^{s_1+s_2-s_3-2s_4} ds_1d_2ds_3ds_4.\nonumber
\end{align}
The zeta functions $\zeta(1+s_1)$ and $\zeta(1+s_2)$ arise from summing over $d_1, d_2$
respectively. The summation over $n_2$ yields $\zeta(2+s_1+s_2-2s_3-2s_4)$, while
summing over $m_1$ gives rise to $\zeta(1+s_1-s_4)$. The $m_2$-variable leads
to the factor $\zeta(-s_1+2s_2+s_4)$ and finally the summation over $c$
gives $\zeta(s_4)/\zeta(1+s_4)$. Here we have used
the fact that $\sum_{c}\frac{\phi{c}}{c^s} =\frac{\zeta(s-1)}{\zeta(s)}$ for $Re(s)>2$.
To evaluate expression $\mathcal{OD}$ asymptotically we will
iteratively shift the contours and pick up poles.

We start to compute the contour integral \eqref{offdiagonalcomplete3}
by shifting the line from $\Re(s_2)=2$ to $\Re(s_2)=-100$.
We pick up a simple pole at $s_2=0$ and $s_2=-1-s_1+2s_3+2s_4$.
The integral over the new line  is negligible by the rapid decay
of $\tilde{\psi_1}, \tilde{\psi_2}, \tilde{\bar{g}}^{\Re}_{w}$
and the Gamma function. The residue at $s_2=0$ is given by
\begin{align}\label{offdiagonalcomplete5}
-\frac{\sqrt{2\pi} G^2}{16}&\frac{1}{(2\pi i)^3} \int_{(1+\varepsilon)}\int_{(1/2+3\varepsilon)}\int_{(1+3\varepsilon)} \tilde{\psi_1}(s_1)\tilde{\psi_2}(0)\Gamma(s_3)\tilde{\bar{g}}^{\Re}_{1+s_1-s_3}(s_4) \\
&\times \zeta(1+s_1)\zeta(2+s_1-2s_3-2s_4)\zeta(1+s_1-s_4)\zeta(-s_1+2s_3+s_4)\frac{\zeta(s_4)}{\zeta(1+s_4) }  \nonumber \\
&\times \big(1+(-1)^{-1-s_1+s_4}\big)\big(1+(-1)^{s_1-2s_3-s_4}\big)(4\pi)^{-s_1}2^{s_3}(8\pi)^{s_4} G^{s_1-s_3-2s_4} ds_1ds_3ds_4.\nonumber
\end{align}

Moving the line $\Re(s_1)=1+2\varepsilon$ to $\Re(s_1)=-100$ yields poles at $s_1=0$ and $s_1=s_4$.
Firstly, we consider the residue at $s_1=0$, which is given by
\begin{align}\label{offdiagonalcomplete6}
-\frac{\sqrt{2\pi} G^2}{16}&\frac{1}{(2\pi i)^2} \int_{(1+\varepsilon)}\int_{(1/2+2\varepsilon)} \tilde{\psi_1}(0)\tilde{\psi_2}(0)\Gamma(s_3)\tilde{\bar{g}}^{\Re}_{1-s_3}(s_4)\big(1+(-1)^{-1+s_4}\big)\big(1+(-1)^{-2s_3-s_4}\big) \\
&\times \zeta(2-2s_3-2s_4)\zeta(1-s_4)\zeta(2s_3+s_4)\frac{\zeta(s_4)}{\zeta(1+s_4) }
 2^{s_3}(8\pi)^{s_4} G^{-s_3-2s_4} ds_3ds_4\nonumber
\end{align}
which is negligible upon shifting $\Re(s_3)=1/2+3\varepsilon$ to $\Re(s_3)=100$
and by the rapid decay of $\tilde{\bar{g}}$ and the Gamma function.
On the other hand the residue of the pole at $s_1=s_4$ of \eqref{offdiagonalcomplete5} leads to
\begin{align}\label{offdiagonalcomplete7}
-\frac{\sqrt{2\pi} G^2}{16}&\frac{1}{(2\pi i)^2} \int_{(1+\varepsilon)}\int_{(1/2+2\varepsilon)}\tilde{\psi_1}(s_4)\tilde{\psi_2}(0)\Gamma(s_3)\tilde{\bar{g}}^{\Re}_{1+s_4-s_3}(s_4)\big(1+(-1)^{-1}\big)\big(1+(-1)^{-2s_3}\big)\\
&\times \zeta(2-2s_3-s_4)\zeta(2s_3)\zeta(s_4)  (4\pi)^{-s_4}2^{s_3}(8\pi)^{s_4} G^{-s_3-s_4} ds_3ds_4,\nonumber
\end{align}
which equals to $0$.
In total we find that the contribution from poles
that arise from $s_2=0$ is negligible.

We now evaluate the residue of  \eqref{offdiagonalcomplete5} at the pole $s_2=-1-s_1+2s_3+2s_4$. We get
\begin{align}\label{offdiagonalcomplete8}
-\frac{\sqrt{2\pi} G^2}{16}&\frac{1}{(2\pi i)^3} \int_{(1+\varepsilon)}\int_{(1/2+3\varepsilon)}\int_{(1+3\varepsilon)} \tilde{\psi_1}(s_1)\tilde{\psi_2}(-1-s_1+2s_3+2s_4)\Gamma(s_3)\tilde{\bar{g}}^{\Re}_{s_3+2s_4}(s_4) \\
&\times \zeta(1+s_1)\zeta(-s_1+2s_3+2s_4)\zeta(1+s_1-s_4)\zeta(-s_1+2s_3+s_4)\frac{\zeta(s_4)}{\zeta(1+s_4)}  \nonumber \\
&\times \big(1+(-1)^{-1-s_1+s_4}\big)\big(1+(-1)^{s_1-2s_3-s_4}\big)(4\pi)^{1-2s_3-2s_4}2^{s_3}(8\pi)^{s_4} G^{-1+s_3} ds_1ds_3ds_4.\nonumber
\end{align}
Next we move the line $\Re(s_3)=1/2+3\varepsilon$ to $\Re(s_3)=\varepsilon$
(stopping before the pole of the Gamma function) and capture poles
at  $s_3=1/2+s_1/2-s_4/2$. The integrals over the new lines
contribute at most $O(K^{2+\varepsilon}G^{-1})$. The residue at $s_3=1/2+s_1/2-s_4/2$ is given by
\begin{align}\label{offdiagonalcomplete12}
-\frac{\sqrt{2\pi} G^2}{16}&\frac{1}{(2\pi i)^2} \int_{(1+\varepsilon)}\int_{(1+3\varepsilon)} \tilde{\psi_1}(s_1)\tilde{\psi_2}(s_4)\Gamma(1/2+s_1/2-s_4/2)\tilde{\bar{g}}^{\Re}_{1/2+s_1/2+3/2s_4}(s_4) \\
&\times \zeta(1+s_1)\zeta(1+s_1-s_4)\frac{1}{2}\zeta(s_4)  (4\pi)^{-s_1-s_4}2^{1/2 +s_1/2-s_4/2}(8\pi)^{s_4} G^{-1/2+s_1/2-s_4/2} \nonumber\\
&\times \big(1+(-1)^{-1-s_1+s_4}\big)\big(1+(-1)^{-1}\big)ds_1ds_4,\nonumber
\end{align}
which equals $0$,
since $\psi_1(y)$ is even, i.e., $\psi_1(y)=\psi_1(1/y)$
implies $\tilde{\psi_1}(s)=\tilde{\psi_1}(-s)$.

Now we finish the proof by setting $\theta=\frac{9}{10},\delta=\frac{9}{160}, \eta=\frac{13}{80}$,
which satisfy all restrictions of $\theta,\delta,\eta$.
\end{proof}

\subsection{Proof of the main theorem}
\begin{proof}[Proof of Theorem \ref{mainthm}]
Recall the definition of $\mathcal{E}_{\psi}$ (see \eqref{errorterm})
and $\mathcal{S}_\psi$ (see \eqref{shiftedconvolutiondefi}).
By Luo and Sarnak (\cite[Section 5]{luoandsarnakMassEquidistributionHecke}) ,
\begin{equation}\label{LuoSarnak}\sum_{k\equiv 0\mod{2}} h\Big(\frac{k-1-K}{G}\Big) \sum_{f\in H_k} L(1, \sym^2 f) |E_\psi|^2 \ll G^{1+\varepsilon}.
\end{equation}
Moreover, we have
\begin{align*}
V(\psi_1, \psi_2) &=\sum_{k\equiv 0\mod{2}} h\Big(\frac{k-1-K}{G}\Big) \sum_{f\in H_k} L(1, \sym^2 f) (S_{\psi_1} + E_{\psi_1})(S_{\psi_2}+E_{\psi_2})\\
&=\sum_{k\equiv 0\mod{2}} h\Big(\frac{k-1-K}{G}\Big) \sum_{f\in H_k} L(1, \sym^2 f) (S_{\psi_1}S_{\psi_2}+S_{\psi_1}E_{\psi_2}+S_{\psi_2}E_{\psi_1}+E_{\psi_1}E_{\psi_2}).
\end{align*}
In Section \ref{VarianceComputation} we have proved that
$$\mathcal{M}(\psi_1, \psi_2)=\sum_{k\equiv 0\mod{2}}
h\Big(\frac{k-1-K}{G}\Big) \sum_{f\in H_k} L(1, \sym^2 f) S_{\psi_1}S_{\psi_2}=
\mathcal{D}+\mathcal{OD}.$$
Then Theorem \ref{mainthm} follows from the Cauchy-Schwarz inequality and the estimate in \eqref{LuoSarnak}.
\end{proof}

\subsection*{Acknowledgments.}
The authors would like to thank Yongxiao Lin for many valuable suggestions.

\bibliographystyle{alpha}
\bibliography{ref_1}

\end{document}